\newcommand{\rank}{{\rm rank\ }}
\newtheorem{theorem}{Theorem}[section]
\newtheorem{definition}[theorem]{Definition}
\newtheorem{remark}[theorem]{Remark}
\newtheorem{proposition}[theorem]{Proposition}
\newtheorem{example}[theorem]{Example}
\begin{document}

\title{Presymplectic convexity and (ir)rational polytopes}

\author{Tudor Ratiu}
\address{School of Mathematics, Shanghai Jiao Tong 
University, 800 Dongchuan Road, Minhang District, Shanghai, 200240 China and Section de Math\'ematiques, Ecole Polytechnique F\'ed\'erale de Lausanne, CH-1015 Lausanne, Switzerland. Partially supported by NCCR SwissMAP grant of the Swiss National Science Foundation. ratiu@sjtu.edu.cn, tudor.ratiu@epfl.ch}

\author{Nguyen Tien Zung}
\address{School of Mathematics, Shanghai Jiao Tong University 
(visiting professor), 
800 Dongchuan Road, Minhang District, Shanghai, 200240 China
and Institut de Math\'ematiques de Toulouse, UMR5219, 
Universit\'e Paul Sabatier, 118 route de Narbonne,
31062 Toulouse, France, tienzung@math.univ-toulouse.fr}

\begin{abstract}
In this paper, we extend the Atiyah--Guillemin--Sternberg convexity 
theorem and Delzant's classification of symplectic toric manifolds to  
presymplectic manifolds. We also define and study the Morita 
equivalence of presymplectic toric manifolds and of their 
corresponding framed momentum polytopes, which may be rational or 
non-rational. Toric orbifolds \cite{LeTo-Orbifold1997}, quasifolds 
\cite{BaPr_SimpleNonrational2001} and non-commutative toric varieties 
\cite{KLMV-NoncommutativeToric2014} may be viewed as the quotient of 
our presymplectic toric manifolds by the kernel isotropy foliation 
of the presymplectic form. 
\end{abstract}

\maketitle

\tableofcontents

\section{Introduction}

The celebrated convexity theorem of Atiyah \cite{At1982}
and Guillemin--Sternberg \cite{GuSt-Convexity1982} states that if a 
connected compact symplectic manifold $(M,\omega)$ admits an 
effective Hamiltonian torus $\mathbb{T}^n$-action with corresponding 
momentum map $F: M \to \mathbb{R}^n$, then the image
$F(M) \subset \mathbb{R}^n$ is a convex $n$-dimensional 
polytope, called the \textit{momentum polytope}, 
which is \textit{rational}, i.e., each facet is given by a 
linear equation with rational linear coefficients in $\mathbb{R}^n$. 

A particularly important special case, related to algebraic toric 
geometry, is when the dimension of the symplectic manifold is 
exactly $2n$, where $n$ is the dimension of the torus which acts on 
it. In this case, the momentum polytope is not only rational, but 
also \textit{simple} (i.e., each $s$-dimensional face 
has exactly $n-s$ faces of dimension $s+1$ adjacent to it) and 
\textit{regular} (i.e., for any point $x_0$ on any $s$-dimensional 
face there is a complete integral affine coordinate system 
$(h_1,\hdots, h_n)$ such that the polytope is locally given near 
$x_0$ by the system of linear inequalities $\{ h_1(x) \geq 0, 
\hdots, h_{n-s}(x) \geq 0\}$). Convex polytopes which satisfy the 
three conditions of rationality, simplicity and regularity are called 
\textbf{\textit{Delzant polytopes}}, because Delzant 
\cite{Delzant1988} found a natural 1-to-1 correspondence between 
such polytopes and connected compact symplectic toric manifolds 
(i.e., those symplectic manifolds which admit an effective 
Hamiltonian torus action of half the dimension).

In this paper, we give a natural extension of these theorems to 
\textit{presymplectic} manifolds. Our main results can be roughly
formulated as follows:

$\bullet$ (Theorem \ref{thm:convexity} and Theorem 
\ref{thm:LocalSymplectization}) \textit{The image of the momentum 
map of a Hamiltonian torus action on a connected compact 
presymplectic manifold with a regular presymplectic form, under a 
natural flatness condition, is a convex polytope (of lower dimension 
in general) in a Euclidean space. Moreover, any such presymplectic 
manifold admits a unique equivariant symplectization.} 

$\bullet$ (Theorem \ref{thm:MoritaPolytopes}, Theorem 
\ref{thm:MoritaToric1} and Theorem \ref{thm:MoritaToric2}). 
\textit{Connected compact presymplectic manifolds
are classified, up to equivariant presymplectic diffeomorphisms, by 
their associated framed momentum polytopes. The classification,
up to Morita equivalence, of connected presymplectic toric 
manifolds is given by the Morita equivalence classes of their 
framed momentum polytopes.} 

Our motivation for this work comes from our desire to understand 
the role in symplectic geometry of convex simple polytopes which 
do not satisfy the rationality or the regularity conditions of 
Delzant polytopes. Many other authors have worked on this question. 
In particular, Lerman and Tolman \cite{LeTo-Orbifold1997} obtained 
the relation between non-regular simple rational polytopes and 
symplectic toric orbifolds.

Battaglia and Prato (see, e.g., 
\cite{Battaglia_ConvexPolytope2012, BaPr_SimpleNonrational2001,
BaPr_NonrationalCuts2016, BaZa_Irrational2015} and references
therein) and Katzarkov--Lupercio--Meersseman--Verjovsky (see, 
\cite{KLMV-NoncommutativeToric2014}) have worked on irrational 
analogues of symplectic toric manifolds. However, we wanted to 
have a simpler understanding of the geometric structure and so 
developed our own approach, which uses presymplectic realizations. 

In the process of studying quotient spaces of presymplectic toric 
manifolds, we are naturally led to the notion of 
\textit{Morita equivalence} of these manifolds,
and of their corresponding \textit{framed momentum polytopes}, 
borrowing the idea from the theory of Lie groupoids and stacks. 
Using our language of Morita-equivalent framed polytopes, we  
recover the results of Lerman and Tolman
\cite{LeTo-Orbifold1997} on symplectic toric orbifolds, and also 
give a clear, easy to understand, definition of what it means for 
two toric quasifolds (in the sense of Prato 
\cite{Prato_Nonrational2001}) to be isomorphic. 

We also note that, in order to turn an irrational convex polytope 
into a momentum polytope of a (pre)symplectic toric object, one 
first needs to lift it \textit{non-isomorphically} to a 
\textit{rational-faced} polytope in a higher-dimensional space! 
This simple but important observation clarifies
the role of irrational polytopes in toric (pre)sympletic geometry.

The paper is structured as follows. Section \ref{section:Convexity}
is devoted to the presymplectic version of the 
Atiyah--Guillemin-Sternberg convexity theorem. Section
\ref{section:Toric} is about presymplectic toric manifolds, 
their framed momentum polytopes, and their Morita equivalence 
classes.  Section \ref{section:Remarks}, the last section of this 
paper, contains some final remarks about related works by other 
authors and related questions. 

\section{Presymplectic convexity theorem}
\label{section:Convexity}

\subsection{The flatness condition} \hfill

The goal of this section is to prove an analogue of the 
Atiyah-Guillemin-Sternberg convexity theorem 
\cite{At1982,GuSt-Convexity1982} for Hamiltonian torus actions on 
presymplectic manifolds.

Let $(M, \omega)$ be a connected compact presymplectic manifold 
of dimension $2n+d$, i.e., $\omega \in \Omega^2(M)$ is closed. 
Suppose that the dimension of the image of the linear maps 
$T_xM \ni v_x \mapsto \omega(x)(v_x, \cdot ) \in T_x^*M$ is $2n$ for 
all $x \in M$, i.e.,
the presymplectic form $\omega$ has constant corank $d$. Assume
that there is a presymplectic torus $\mathbb{T}^{q+d}$-action on $M$
which is effective, i.e.,
the intersection of all isotropy groups of the torus action is the 
identity element. In addition, suppose that this action is Hamiltonian
in the following presymplectic sense.

Let the vector fields  $X_1,\hdots, X_{q+d}$ on $M$ 
be a family of generators of the torus action, i.e., the flow of each 
$X_i$ is periodic of period 1 and together they form the 
$\mathbb{T}^{q+d}$-action. Then, for each $i=1, \ldots, q+d$, there 
is a function $F_i: M \to \mathbb{R}$, called the
\textbf{\textit{Hamiltonian function}}
of $X_i$, such that  
$$
\omega(X_i, \cdot ) = {\rm d}F_i,
$$
similarly to the symplectic case. The map 
$$
F= (F_1, \ldots, F_{q+d}): M \rightarrow \mathbb{R}^{q+d}
$$
is called the \textbf{\textit{momentum}}
map of the Hamiltonian $\mathbb{T}^{q+d}$-action. 
It is well-known that this momentum map $F$ 
is invariant with respect to the $\mathbb{T}^{q+d}$-action, so it 
factors through a map from the space of orbits $M/\mathbb{T}^{q+d}$ 
of the $\mathbb{T}^{q+d}$-action to $\mathbb{R}^{q+d}$. We will assume 
that \textit{the kernel of $\omega$ lies in the tangent spaces to the 
orbits of the $\mathbb{T}^{q+d}$-action}. Under this assumption, 
the rank of $F$ will be at most $q$ 
at every point of $M$, and so the image 
$F(M)\subset \mathbb{R}^{q+d}$ is also of dimension at most $q$. 

In the symplectic case, when $d=0$ and $\omega$ is non-degenerate, 
the celebrated Atiayh--Guillemin--Sternberg theorem 
\cite{At1982,GuSt-Convexity1982} states that $F(M)$ is a convex 
polytope. We want to obtain a similar result for the
presymplectic case, i.e., we want to see when the image $F(M)$ of 
the momentum map $F$ of a presymplectic manifold $M$ is 
still a convex polytope. If this is the case, 
then the image (which has dimension at most $q$ by our assumptions)
must lie in a $q$-dimensional affine subspace, 
i.e., the intersection of $d$ hyperplanes in $\mathbb{R}^{q+d}$. We 
call this the \textbf{\textit{flatness condition}}
of the momentum map.

\begin{definition}
\label{def:flatness}
With the hypotheses and notations above, we say that the momentum 
map $F=(F_1, \ldots, F_{q+d}):M \rightarrow \mathbb{R}^{q+d}$ is 
\textbf{\textit{flat}} if it satisfies 
$d$ linearly independent affine relations on $M$:
\begin{equation}
\label{eqn:flatF}
\sum_{j=1}^{q+d} a_{ij}F_j = b_i, \quad a_{ij}, b_i \in \mathbb{R},
\quad i = 1,\hdots, d.
\end{equation}
In other words, the image $F(M)$ of the momentum map lies in the
$q$-dimensional intersection 
$$
L = \bigcap_{i=1}^d L_i
$$
of the hyperplanes 
\[
L_i =\left\{x=(x_1, \ldots, x_{q+d}) \in \mathbb{R}^{q+d} \, 
\left|\, \sum_{j=1}^{q+d} a_{ij}x_j = b_i\right.\right\}, \quad 
i=1, \ldots, d.
\]
\end{definition}

The above flatness condition is equivalent to the inclusions
$Y_i \in \ker \omega$ for all $i=1,\hdots, d$, where
\begin{equation}\label{eqn:Yi}
Y_i = \sum_{j=1}^{q+d} a_{ij}X_j, \quad i=1, \ldots, d,
\end{equation}
with the same constant coefficients $a_{ij}$ as in equation 
\eqref{eqn:flatF}.

\begin{example}[Flat slice] {\rm
Let $F: (\hat{M}^{2(n+d)},\omega) \to \mathbb{R}^{q+d}$ be the momentum 
map of a Hamiltonian effective torus $\mathbb{T}^{q+d}$-action on a 
connected compact symplectic manifold $(\hat{M}^{2(n+d)},\omega)$, and 
let $L$ be an arbitrary $q$-dimensional affine subspace of 
$\mathbb{R}^{q+d}$ which intersects the
$(q+d)$-dimensional polytope $F(\hat{M}^{2(n+d)})$ \textit{transversally} 
at $P = L \cap F(\hat{M}^{2(n+d)})$. Then $(M = F^{-1}(P),\omega)$ is a 
$(2n+d)$-dimensional presymplectic manifold with the inherited 
Hamiltonian torus $\mathbb{T}^{q+d}$-action from 
$(\hat{M}^{2(n+d)},\omega)$, the presymplectic form $\omega$ on $M$ has 
constant corank $d$, the inherited momentum map $F$ is flat on $M$, 
and its image $F(M) = P$ is a $q$-dimensional convex polytope. We 
say that $M$ is a \textbf{\textit{flat presymplectic slice}} of 
$(M^{2(n+d)},\omega)$ by $L$. 

If, instead of taking the transversal 
intersection of $F(\hat{M}^{2(n+d)})$ with an affine $q$-dimensional
subspace $L \subset \mathbb{R}^{q+d}$, we take its transversal 
intersection 
$$
P' = S \cap F(\hat{M}^{2(n+d)})
$$ 
with a curved $q$-dimensional submanifold $S \subset \mathbb{R}^{q+d}$, 
then $M' = F^{-1}(P')$ is still a presymplectic manifold
with a Hamiltonian torus $\mathbb{T}^{q+d}$-action on it, the kernel of 
the presymplectic form still lies in the tangent space to the orbits of the 
$\mathbb{T}^{q+d}$-action at every point, but its image under the momentum 
map is now $P'$, which is a non-convex set. 
}
\end{example}

\subsection{The presymplectic convexity theorem} \hfill

It turns out that the above flatness condition, which is of course
a necessary condition for the convexity of $F(M)$ under our assumptions, 
is also the only additional condition that one needs in order to ensure that 
$F(M)$ is a $q$-dimensional convex polytope. Moreover, if we assume that
$F(M)$ is flat $q$-dimensional, then the condition that the kernel of $\omega$
is tangent to the orbits of the torus action is automatically satisfied, 
at least at regular points of the torus action. 

\begin{theorem}\label{thm:convexity}
Let $F: M^{2n+d} \to \mathbb{R}^{q+d}$ be a flat momentum map
of a Hamiltonian torus $\mathbb{T}^{q+d}$-action on a connected 
compact presymplectic manifold $(M^{2n+d},\omega)$ whose 
presymplectic form $\omega$ has constant corank $d$. Then the image 
$F(M)$ is a convex $q$-dimensional polytope 
lying in a $q$-dimensional affine subspace $L$ of 
$\mathbb{R}^{q+d}$.
\end{theorem}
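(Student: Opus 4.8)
The plan is to reduce the statement to the classical Atiyah--Guillemin--Sternberg theorem \cite{At1982,GuSt-Convexity1982} by quotienting out the kernel foliation locally, and then to upgrade the resulting local convexity to global convexity by a connectedness argument. First I would record the basic structural facts. Since $\omega$ is closed of constant rank $2n$, its kernel $\ker\omega$ is an integrable distribution of rank $d$, defining a foliation $\mathcal F$; the flatness condition, equivalently the inclusions $Y_i\in\ker\omega$ for the vector fields of \eqref{eqn:Yi}, identifies $\ker\omega$ with the span of $Y_1,\dots,Y_d$ at every point where the $\mathbb{T}^{q+d}$-action is locally free (there the $X_j$ are independent and the coefficient vectors of the $d$ linearly independent relations \eqref{eqn:flatF} make the $Y_i$ independent). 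These $Y_i$ commute and are complete, so the leaves of $\mathcal F$ are orbits of a (not necessarily closed) $d$-dimensional subgroup of the torus. The key elementary observation is that $F$ is constant along $\mathcal F$: for $v\in\ker\omega$ one has $dF_i(v)=\omega(X_i,v)=0$, so $F$ descends to the leaf space.

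Next, local convexity. Around any orbit I would invoke a presymplectic equivariant slice/normal form, writing $M$ locally as a product of a plaque of a leaf with a symplectic transversal $(M_{\mathrm{red}},\omega_{\mathrm{red}})$ carrying a Hamiltonian torus action whose momentum map is the descent $\bar F$ of $F$. The symplectic local structure theorem then shows that the image $F(M)=\bar F(M_{\mathrm{red}})$ is, near each point, a convex polyhedral cone contained in the affine subspace $L$, and that the local fibers are connected. This yields local convexity and local fiber-connectedness of $F$ everywhere in $L$.

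To pass from local to global I would establish that the fibers $F^{-1}(c)$ are connected. Because $M$ is compact this follows from an Atiyah-type argument: for suitable $\xi$ in the Lie algebra, $\langle\xi,F\rangle$ is a Morse--Bott function whose critical submanifolds (where the infinitesimal generator $\xi_M$ lies in $\ker\omega$) have even index and coindex---the presymplectic transverse Hessian computation mirrors the symplectic one, the kernel directions contributing trivially---so the level sets, and hence the fibers of $F$, are connected. With properness (from compactness), connected fibers, and a locally convex, locally polyhedral image, the Local--Global Convexity Principle gives that $F(M)$ is convex; compactness together with the finiteness of the fixed-point data makes it a $q$-dimensional polytope lying in $L$.

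The main obstacle is genuinely global: when the coefficients $a_{ij}$ are irrational, the subgroup of $\mathbb{T}^{q+d}$ generated by $Y_1,\dots,Y_d$ is dense rather than closed, so the global leaf space $M/\mathcal F$ is typically not a manifold and a single global symplectic reduction is unavailable. The argument must therefore remain genuinely local on the symplectic side and be glued together purely through the connectedness/local-global mechanism, rather than through one global quotient. Verifying that the presymplectic degeneracy does not spoil the Morse--Bott structure underlying the connectedness of fibers is the delicate technical point on which the whole reduction rests.
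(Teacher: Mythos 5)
Your overall skeleton --- establish a local normal form near each orbit, then conclude by the local--global convexity principle --- matches the paper's endgame (the paper applies Lemma~3.7 of \cite{Zung-Proper2006} to the base space $\mathcal{B}$ of connected fiber components with its integral affine structure, so your separate Morse--Bott fiber-connectedness argument is the Atiyah-style alternative and is not needed). The genuine gap is in your central local step. You propose to obtain the local model by quotienting out $\ker\omega$ and invoking the symplectic slice theorem on a ``symplectic transversal carrying a Hamiltonian torus action whose momentum map is the descent $\bar F$ of $F$.'' When the coefficients $a_{ij}$ in \eqref{eqn:flatF} are irrational, no such local model exists in the form you state: the subgroup $H\subset\mathbb{T}^{q+d}$ generated by the flows of $Y_1,\dots,Y_d$ acts trivially on leaves, and if a continuous torus action on the transversal had $H$ in its ineffectivity kernel it would have the whole closure $\overline{H}$ there, with $\dim\overline{H}>d$; that would force $\operatorname{rank}\bar F<q$, contradicting the fact that $F$ has rank $q$ at regular points. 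So the transversal carries only an infinitesimal Hamiltonian $\mathbb{R}^q$-action, and the Marle--Guillemin--Sternberg normal form for \emph{torus} actions cannot be invoked there. You correctly flag the global version of this obstruction but assume the argument can ``remain genuinely local on the symplectic side''; the same irrationality already breaks the local reduction.

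The paper's resolution goes in the opposite direction: instead of quotienting down, it thickens up. Theorem~\ref{thm:LocalSymplectization} embeds $M$ coisotropically into $M\times D^d$ with an explicit symplectic form $\tilde\omega=\sum dh_i\wedge\alpha_i+\sum h_i\,d\alpha_i+proj^*\omega$ (uniqueness via the equivariant Gotay theorem), on which the honest $\mathbb{T}^{q+d}$-action survives and is Hamiltonian; the Marle--Guillemin--Sternberg normal form is applied \emph{there}, and the presymplectic normal form (Theorem~\ref{thm:PresymplecticNF}) is obtained by slicing the symplectic tube along $\{\rho+J_V(v)\in\mathfrak{l}\}$. This is the device that makes the local step work in the irrational case, not an optional alternative. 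A secondary omission: you establish that $Y_1,\dots,Y_d$ span $\ker\omega$ only at points where the action is locally free, whereas the paper needs (and proves, with a separate linearization argument) that they are independent and span $\ker\omega$ at singular orbits as well, since the normal form is needed precisely near those orbits.
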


We reduce the proof of the above theorem to the symplectic case.
In order to do so, we first study the kernel of the presymplectic
form on $M^{2n+d}$. Then we show that $(M^{2n+d},\omega)$, 
together with the Hamiltonian torus action, admits a natural 
symplectization (Theorem \ref{thm:LocalSymplectization}). This 
local symplectization theorem allows us to deduce the local 
normal form of a Hamiltonian torus action in the presymplectic 
case from  the one in symplectic case, which, in turn, reduces 
the convexity problem in the presymplectic case to the well-known 
convexity result in the symplectic case.

\subsection{On the kernel of the presymplectic form} \hfill

We begin
with the following statement linking the kernel of the presymplectic
form with the tangent spaces to the $\mathbb{T}^{q+d}$-action.

\begin{proposition}
Under the assumptions of Theorem \ref{thm:convexity}, 
for every point $y \in M^{2n+d}$ we have 
$$
\ker \omega (y) \subset T_y(\mathbb{T}^{q+d}\cdot y),
$$
where $\mathbb{T}^{q+d}\cdot y$ denotes the orbit of the Hamiltonian 
$\mathbb{T}^{q+d}$-action through $y$ and $T_y(\mathbb{T}^{q+d}
\cdot y)$ is its tangent space at $y$. Moreover, the vector fields 
$Y_1,\hdots,Y_d$ given by equation \eqref{eqn:Yi}
are linearly independent  and span $\ker \omega$ at every point of 
$M^{2n+d}$.
\end{proposition}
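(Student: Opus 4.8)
The plan is to reduce the entire statement to the single claim that the vector fields $Y_1,\dots,Y_d$ are pointwise linearly independent. First I would record the easy inclusion $\mathrm{span}\{Y_1,\dots,Y_d\}\subseteq\ker\omega$: differentiating each flatness relation $\sum_j a_{ij}F_j=b_i$ gives $\sum_j a_{ij}\,\dif F_j=0$, and since $\dif F_j=\omega(X_j,\cdot)$ this reads $\omega(Y_i,\cdot)=0$, i.e. $Y_i\in\ker\omega$. Because $\omega$ has constant corank $d$, the fibres $\ker\omega(y)$ are exactly $d$-dimensional; hence if the $Y_i(y)$ are linearly independent they automatically span $\ker\omega(y)$, and as each $Y_i(y)\in T_y(\mathbb{T}^{q+d}\cdot y)$ (being a combination of the $X_j$) the inclusion $\ker\omega(y)\subseteq T_y(\mathbb{T}^{q+d}\cdot y)$ follows for free. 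Writing $W=\mathrm{span}\{\alpha_1,\dots,\alpha_d\}\subseteq\ft$ for the span of the rows $\alpha_i=(a_{i1},\dots,a_{i,q+d})$ of the coefficient matrix, and $\ft_y$ for the isotropy Lie algebra at $y$, pointwise independence of the $Y_i$ is precisely the transversality $W\cap\ft_y=\{0\}$.

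At a regular (principal) point this is immediate. The action is effective, so the principal isotropy subgroup of the torus is trivial; thus on the open dense principal stratum $\ft_y=0$, the generators $X_1(y),\dots,X_{q+d}(y)$ are linearly independent, and since $(a_{ij})$ has rank $d$ the images $Y_i(y)=\sum_j a_{ij}X_j(y)$ are independent as well. So $W\cap\ft_y=0$ on a dense open set, and the whole difficulty is to propagate this to the singular points, where $\ft_y$ may be positive-dimensional.

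For the singular points I would first extract the key linear-algebraic constraint. Suppose, for contradiction, that $0\neq\zeta=\sum_i c_i\alpha_i\in W\cap\ft_y$, and set $Z=\zeta_M=\sum_i c_iY_i\in\ker\omega$, so $Z(y)=0$. From $\iota_Z\omega\equiv 0$ together with $Z(y)=0$, differentiating the identity $\omega_{kj}Z^k=0$ at $y$ yields $\omega(y)\big((\dif Z)_y u,\,v\big)=0$ for all $u,v$, that is $\mathrm{im}(\dif Z)_y\subseteq\ker\omega(y)$; and since $\exp(t\zeta)$ lies in the compact torus, the linearization $A:=(\dif Z)_y$ is semisimple with purely imaginary spectrum. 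Thus any hypothetical $W$-isotropy acts on $T_yM$ as a rotation confined to the null subspace $\ker\omega(y)$ and trivially on the transverse symplectic quotient $T_yM/\ker\omega(y)$ — it is completely invisible to the local symplectic leaf space $M/\ker\omega$, which is exactly why effectiveness alone (or any argument carried out on the transverse symplectic quotient) cannot see it. I expect this to be the main obstacle, and it is genuinely sharp: a single field $Z\in\ker\omega$ with a rotational zero is compatible with constant corank, so the contradiction must use the global torus structure, and it is further complicated by the fact that $W$ may be irrational, so one cannot simply replace $\zeta$ by a lattice vector and pass to a circle subgroup inside $W$.

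To close the argument I would work on the null leaf $\mathcal{L}_y$ through $y$, the $d$-dimensional integral manifold of the integrable distribution $\ker\omega$ (integrability follows from $d\omega=0$ and constant rank). The commuting fields $Y_1,\dots,Y_d$ are tangent to $\mathcal{L}_y$ and generate an $\RR^d$-action on it via $W$; on the dense regular stratum of $\mathcal{L}_y$ this action is locally free, hence locally conjugate to translations, so there every $Z=\sum_i c_iY_i$ is a nonvanishing field whose orbits are proper and escape to the boundary of a coordinate chart. If $A\neq 0$, however, the imaginary spectrum forces the $Z$-flow to have bounded, rotating orbits accumulating at $y$, which is incompatible with the translation behaviour at regular points arbitrarily close to $y$; this rules out $A\neq 0$. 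A short supplementary argument, again using semisimplicity and the compactness of $\overline{\exp(\mathbb{R}\zeta)}$, then excludes the degenerate case $A=0$, $Z(y)=0$ with $\zeta\neq 0$, so that no nonzero $\zeta\in W$ can lie in $\ft_y$. Therefore $W\cap\ft_y=\{0\}$ at every point, the $Y_i$ are everywhere linearly independent, they span $\ker\omega$, and the inclusion $\ker\omega(y)\subseteq T_y(\mathbb{T}^{q+d}\cdot y)$ holds at every $y\in M^{2n+d}$. The rotation-versus-translation incompatibility on the null leaf, in the presence of an irrational $W$, is the step I expect to require the most care.
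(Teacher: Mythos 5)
Your logical organization is sound and genuinely different from the paper's: you reduce everything to pointwise linear independence of $Y_1,\dots,Y_d$ (correctly observing that independence plus constant corank forces $\ker\omega(y)=\mathrm{span}\{Y_i(y)\}\subseteq T_y(\mathbb{T}^{q+d}\cdot y)$), whereas the paper proves the inclusion first --- at singular points by a continuity argument in a slice, using that nearby regular orbits lie on cylinders around spheres in the slice --- and only then uses that inclusion to rule out a vanishing $Y_i$ at a singular point, via simultaneous linearization of the action and of $\ker\omega$. Your regular-point case, the identity $\mathrm{im}(\dif Z)_y\subseteq\ker\omega(y)$, and the semisimplicity of $A=(\dif Z)_y$ are all correct, and the $A=0$ case is indeed dispatched by Bochner linearization of $\overline{\exp(\mathbb{R}\zeta)}$ plus effectiveness.

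However, the step you yourself flag as delicate --- the ``rotation versus translation'' contradiction on the null leaf --- does not go through as described, and this is a genuine gap rather than a detail. Two problems. First, you invoke the ``dense regular stratum of $\mathcal{L}_y$'', but a priori the null leaf through a singular point could lie entirely inside the singular stratum of the torus action (e.g.\ inside the fixed-point manifold of a subtorus, which can have dimension $\geq d$); density of torus-regular points in $M$ does not give density in a particular $d$-dimensional immersed leaf, and you cannot yet use ``leaves are contained in orbits'' since that is equivalent to what you are proving. Second, and more seriously, the dichotomy ``bounded rotating orbits near $y$'' versus ``proper orbits escaping the chart at regular points'' is false: at a regular point $y'$ near $y$, the $Z$-orbit is still contained in the orbit of the compact subtorus $\overline{\exp(\mathbb{R}\zeta)}$ through $y'$, which in Bochner coordinates lies on the sphere of radius $|y'|$ about $y$; so it is exactly as bounded and recurrent as the rotating orbits, while being locally a translation flow (compare an irrational winding line on a torus). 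Local conjugacy to translations only says the orbit exits a sufficiently small flow box, which rotational orbits at distance $r$ from the fixed point also do. So no contradiction is produced, and closing the argument requires a different mechanism --- for instance the paper's: after linearizing, a nonzero \emph{constant} direction of $\ker\omega$ would have to lie in the span of the linear fields $X_1(z),\dots,X_s(z)$ (which all vanish at $y$) at every nearby point, which fails at generic $z$ and contradicts the already-established inclusion $\ker\omega\subseteq T(\mathbb{T}^{q+d}\cdot\,)$ at regular points.
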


\begin{proof}
First consider the generic case, when the $\mathbb{T}^{q+d}$-action
is locally free at $y$. Then $T_y(\mathbb{T}^{q+d} \cdot y) = 
{\rm span}(X_1,\hdots, X_{q+d}) (y)$ is of dimension $q+d$.
Its image under the contraction map $X \mapsto \omega(X,\cdot)$ 
is of dimension at most $q$, so the kernel of this linear map is 
of dimension at least $d$. However, the kernel of this linear 
map lies in $\ker \omega$, which is of dimension exactly $d$. 
Therefore, the kernel of the linear map $X \mapsto 
\omega(X,\cdot )$ on $T_y(\mathbb{T}^{q+d} \cdot y)$ coincides with 
$\ker \omega (y)$, which implies 
$\ker \omega (y) \subset T_y(\mathbb{T}^{q+d} \cdot y).$

Consider now the case when $y$ is a singular point for the torus 
$\mathbb{T}^{q+d}$-action, i.e., $\dim \mathbb{T}^{q+d} \cdot y < 
q+d$. We show that the inclusion $\ker \omega (y) \subset 
T_y(\mathbb{T}^{q+d} \cdot y)$ still holds.

By the slice theorem, there is a local submanifold $N(y)$ which 
intersects the orbit $\mathbb{T}^{q+d}\cdot y$ transversally at 
$y$ and which is saturated by the orbits of the action of 
$\mathbb{T}_y$, where $\mathbb{T}_y$ denotes the connected 
component of the identity of isotropy subgroup of the 
$\mathbb{T}^{q+d}$-action at $y$. The singular foliation by the orbits 
of the torus action is locally a direct product of the orbits of 
$\mathbb{T}_y$ on $N(y)$ with a small neighborhood of $y$ in the 
orbit $\mathbb{T}^{q+d} \cdot y$. Moreover, by local linearization, 
the orbits of $\mathbb{T}_y$ on $N(y)$
can be assumed to lie on concentric spheres centered at $y$. 

If $\ker \omega (y) \not\subset T_y(\mathbb{T}^{q+d} \cdot y)$, there 
would exist a non-zero vector $Y \in \ker \omega (y) \cap T_y N(y)$. 
By continuity, for every point $y' \in N(y)$ near $y$ which is 
regular with respect to the $\mathbb{T}^{q+d}$-action, there is also 
a vector $Y' \in \ker \omega (y')$ which is ``almost equal to $Y$''; 
$y'$ can be chosen so that $Y'$ is transverse to the cylinder which 
is the direct product of the sphere centered at $y$
in $N(y)$ with a small neighborhood of $y$ in 
$\mathbb{T}^{q+d} \cdot y$ in the local
linearized model for the torus $\mathbb{T}^{q+d}$-action. On the 
other hand, locally the orbit through $y'$ lies on this cylinder, so 
$Y' \notin T_{y'}(\mathbb{T}^{q+d} \cdot y')$, which is a 
contradiction, because $y'$ is a regular point and we must have 
$Y' \in \ker \omega (y') \subset T_{y'}(\mathbb{T}^{q+d} \cdot y')$. 

Recall that the vector fields $Y_1,\hdots, Y_d$ are tangent to 
$\ker \omega$, and are given by linearly independent linear 
combinations of $X_1,\hdots, X_{q+d}$, so at
a regular point $y$, where $X_1,\hdots, X_{q+d}$ are linearly 
independent, we also have that $Y_1,\hdots, Y_d$ are linearly 
independent and span $\ker \omega$, because 
$\dim \ker \omega (y) =d$. 

Let us show that if $y$ is a singular point, i.e.,
the connected component $\mathbb{T}_y$ of the isotropy group of the 
torus $\mathbb{T}^{n+k}$-action has positive dimension $s \geq 1$, 
then $Y_1,\hdots, Y_d$ are still linearly independent at $y$. Without 
loss of generality, we may assume that the subgroup $\mathbb{T}_y 
\subset \mathbb{T}^{n+k}$ is generated by $X_1,\hdots, X_{s}$. 
Assume that $Y_1,\hdots, Y_d$ are linearly dependent at $y$.
Then, without loss of generality, we may assume that $Y_1(y) = 0$.
In the linear combination expression
$Y_1 = \sum a_{1i}X_i$ we must have $a_{1i} = 0$ for all $i > s$ 
(because otherwise $Y_1(y)$ would be non-zero), so $Y_1 = 
\sum_{i=1}^s a_{1i}X_i$. Since the torus
$\mathbb{T}^{n+k}$-action preserves the regular integrable 
distribution $\ker \omega$, we can linearize simultaneously this 
torus action and $\ker \omega$ near $y$, i.e., find a coordinate
system $(x_1,\hdots, x_{n+d-s},z_1,\hdots,z_{n+s})$ of $M$ centered at 
$y$ in which $\ker \omega$ is a constant distribution,
$X_{s+i} = \partial/\partial x_i$ for every $i=1,\hdots, n+d-s$, and 
$X_1,\hdots, X_s$ and $Y_1$ are linear vector fields in 
$z_1,\hdots,z_{n+s}$ with imaginary eigenvalues. 
In order for $\ker \omega$ to contain $Y_1$ in this linearized coordinate 
system, $\ker \omega$ must have non-trivial intersection with the 
distribution spanned by $\partial/\partial z_1,\hdots, 
\partial/\partial z_{n+s}$ (at point $y$, 
and hence at any point in a small neighborhood of $y$ because both 
distributions are constant).
But such a nontrivial intersection is not tangent
to the vector space generated by $X_1,\hdots X_s$ at a generic point, 
and hence $\ker \omega$ will not be tangent to the 
vector space generated by $X_1,\hdots X_s, X_{s+1},\hdots, X_{n+d}$ at a 
generic point, which is a contradiction. Thus $Y_1,\hdots, Y_d$ must be 
linearly independent at $y$.
\end{proof}

\begin{proposition}
Under the assumptions of Theorem \ref{thm:convexity}, 
for every point $y \in M^{2n+d}$ we have the following 
equality:
$$
(q+d) - \dim \mathbb{T}^{n+d} \cdot y = q - 
\rank ({\rm d}F_1,\hdots, {\rm d}F_{q+d})(y) 
$$
\end{proposition}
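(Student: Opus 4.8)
The plan is to reduce the identity to pointwise linear algebra on $T_yM$, with the preceding proposition as the only nontrivial input. I would fix $y \in M^{2n+d}$ and introduce the contraction map $\Phi_y \colon T_yM \to T_y^*M$, $v \mapsto \omega(y)(v,\cdot)$. Since $\omega$ has constant corank $d$ and $\dim M = 2n+d$, this map has rank $2n$ and kernel exactly $\ker\omega(y)$, of dimension $d$. The defining relation $\omega(X_i,\cdot)={\rm d}F_i$ reads pointwise as ${\rm d}F_i(y) = \Phi_y(X_i(y))$, so the span of the differentials ${\rm d}F_1(y),\ldots,{\rm d}F_{q+d}(y)$ is precisely the image $\Phi_y\big({\rm span}(X_1(y),\ldots,X_{q+d}(y))\big)=\Phi_y\big(T_y(\mathbb{T}^{q+d}\cdot y)\big)$, where I use that the infinitesimal generators $X_i(y)$ span the orbit tangent space. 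Hence $\rank({\rm d}F_1,\ldots,{\rm d}F_{q+d})(y) = \dim \Phi_y\big(T_y(\mathbb{T}^{q+d}\cdot y)\big)$.

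The second step is rank-nullity applied to the restriction of $\Phi_y$ to the orbit tangent space:
$$
\dim \Phi_y\big(T_y(\mathbb{T}^{q+d}\cdot y)\big) = \dim T_y(\mathbb{T}^{q+d}\cdot y) - \dim\big(\ker\omega(y)\cap T_y(\mathbb{T}^{q+d}\cdot y)\big).
$$
This is where the previous proposition enters: since $\ker\omega(y)\subset T_y(\mathbb{T}^{q+d}\cdot y)$ for every $y$, the intersection equals $\ker\omega(y)$, whose dimension is $d$. Combining with the first step gives $\rank({\rm d}F)(y) = \dim T_y(\mathbb{T}^{q+d}\cdot y) - d = \dim(\mathbb{T}^{q+d}\cdot y) - d$, which is exactly the asserted equality after transposing terms, namely $(q+d)-\dim(\mathbb{T}^{q+d}\cdot y) = q - \rank({\rm d}F)(y)$.

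The point I would stress is that there is essentially no obstacle once the preceding proposition is in hand: the inclusion $\ker\omega(y)\subset T_y(\mathbb{T}^{q+d}\cdot y)$ was established there at \emph{every} point, singular or regular, so the rank-nullity count is uniform across $M$ and no separate analysis of the singular stratum is required. The only items to double-check are the two elementary facts used silently above, that $\dim T_y(\mathbb{T}^{q+d}\cdot y) = \dim{\rm span}(X_1(y),\ldots,X_{q+d}(y))$ for a smooth torus action, and that constant corank $d$ forces $\dim\ker\omega(y)=d$ at the chosen $y$; both are immediate from the standing hypotheses.
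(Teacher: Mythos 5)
Your proof is correct and is essentially the paper's own argument: both apply rank--nullity to the contraction map $v \mapsto \omega(y)(v,\cdot)$ restricted to the orbit tangent space, identify its image with the span of the ${\rm d}F_i(y)$ and its kernel with $\ker\omega(y)$ via the preceding proposition's inclusion $\ker\omega(y)\subset T_y(\mathbb{T}^{q+d}\cdot y)$. Your version merely spells out the elementary steps the paper leaves implicit.
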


\begin{proof}
Tbis is a direct consequence of the previous proposition: the 
map $X \mapsto \omega(X,\cdot )$ sends 
$T_y(\mathbb{T}^{n+d} \cdot y)$ to 
${\rm span} ({\rm d}F_1,\hdots, {\rm d}F_{q+d})(y)$ and its 
kernel is $\ker \omega (y) \subset T_y(\mathbb{T}^{n+d} \cdot y)$ 
which is $d$-dimensional, hence $\dim \mathbb{T}^{n+d} \cdot y = 
d + \rank ({\rm d}F_1,\hdots, {\rm d}F_{q+d})(y)$.
\end{proof}

The number $(q+d) - \dim \mathbb{T}^{n+d} \cdot y = q - 
\rank ({\rm d}F_1,\hdots, {\rm d}F_{q+d})(y)$ is called the 
\textbf{\textit{corank}} of $y$ with respect to the 
Hamiltonian torus action and is denoted by $\text{corank}\ y$. 
The point $y$ is \textbf{\textit{regular}} if and only if 
its corank is 0, or, equivalently, the orbit through $y$ has 
dimension $q+d$, or, equivalently, the momentum map has rank 
$q$ at $y$. The point $y$ is a \textbf{\textit{maximally 
singular}} if and only if the momentum map has rank zero at 
$y$, in which case the orbit through $y$ has dimension $d$ and 
is a leaf of the isotropic foliation of $\omega$.

\subsection{Local symplectization} \hfill

In order to prove Theorem \ref{thm:convexity}, we will need the following 
symplectization result.

\begin{theorem}[Local Symplectization] 
\label{thm:LocalSymplectization}
Under the hypotheses of Theorem {\rm \ref{thm:convexity}},
there exists a Hamiltonian torus $\mathbb{T}^{q+d}$-action on 
$M \times D^{d}$ (where $D^{d}\subset \mathbb{R}^d$ denotes a 
small $d$-dimensional open disk) equipped with an appropriate  
symplectic form $\tilde{\omega}$ and momentum map $\tilde{F}$ such that:
\begin{itemize}
\item [{\rm (i)}] For each $z \in D^{d}$,  $M \times \{z\}$ is 
presymplectic of constant corank $d$
and is invariant with respect to the torus 
$\mathbb{T}^{q+d}$-action.
\item [{\rm (ii)}] Denote by $O \in D^d$ the origin of the 
disk. Then $M \times \{ O \}$ together
with the pull back of $\tilde{\omega}$, the restriction of the 
$\mathbb{T}^{q+d}$-action and of $\tilde{F}$, coincides 
with the original $M$ with its presymplectic form, Hamiltonian 
$\mathbb{T}^{q+d}$-action, and momentum map $F$.
\end{itemize}

Moreover, this local symplectization is unique in the 
following natural sense. If there 
is an equivariant presymplectic embedding 
$\phi: M \to (\hat{M}^{2(n+d)},\hat{\omega})$ from 
$M$ to a symplectic manifold $(\hat{M}^{2(n+d)},\hat{\omega})$ equipped 
with a Hamiltonian  $\mathbb{T}^{q+d}$-action, then $\phi$ 
can be extended to an equivariant symplectic diffeomorphism 
from a neighborhood of $M \cong M \times \{O\}$ in $M \times 
D^d$ (equipped with the above symplectic from and Hamiltonian 
$\mathbb{T}^{q+d}$-action) into $(\hat{M}^{2(n+d)},\hat{\omega})$.
\end{theorem}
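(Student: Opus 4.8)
The plan is to produce the symplectization explicitly, as a tautological thickening in the spirit of Gotay's coisotropic embedding theorem, and then to establish uniqueness by an equivariant Moser argument. Throughout I write $\pi\colon M\times D^d\to M$ for the projection and $(z_1,\dots,z_d)$ for the linear coordinates on $D^d\subset\mathbb{R}^d$. The starting point is the preceding proposition on $\ker\omega$: it guarantees that $\ker\omega$ is the rank-$d$ subbundle framed \emph{everywhere} (including singular orbits) by the commuting, $\mathbb{T}^{q+d}$-invariant vector fields $Y_1,\dots,Y_d$. First I would pick $1$-forms $\sigma_1,\dots,\sigma_d$ on $M$ with $\sigma_i(Y_k)=\delta_{ik}$ (a coframe dual to $Y_1,\dots,Y_d$ along some complement of $\ker\omega$) and average them over the compact torus; since each $Y_k$ is torus-invariant and each $\sigma_i(Y_k)$ is constant, averaging keeps $\sigma_i(Y_k)=\delta_{ik}$ while making the $\sigma_i$ invariant. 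Then I set $\lambda=\sum_i z_i\,\pi^*\sigma_i$ and define $\tilde\omega=\pi^*\omega+d\lambda$, and let $\mathbb{T}^{q+d}$ act as the product of its action on $M$ with the trivial action on $D^d$.

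Closedness of $\tilde\omega$ is then automatic, and nondegeneracy at $z=0$ reduces to invertibility of the matrix $(\sigma_i(Y_k))=\mathrm{Id}$, so $\tilde\omega$ is symplectic on $M\times D^d$ once $D^d$ is shrunk (using compactness of $M$). For condition (i), the pullback to a slice $\{z=c\}$ is $\omega+\sum_i c_i\,d\sigma_i$, and since $\iota_{Y_k}d\sigma_i=\mathcal{L}_{Y_k}\sigma_i-d\,\iota_{Y_k}\sigma_i=0$, every $Y_k$ lies in its kernel; thus the corank of each slice is at least $d$, while upper semicontinuity of the corank together with compactness of $M\times\{0\}$ forces it to equal $d$ after a further shrinking of $D^d$. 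Condition (ii) is immediate because $\lambda$ and $d\lambda$ vanish along $M\times\{0\}$. The action is Hamiltonian with momentum map $\tilde F_j=F_j-\iota_{X_j}\lambda=F_j-\sum_i z_i\,\sigma_i(X_j)$, since invariance of $\sigma_i$ gives $\iota_{X_j}\tilde\omega=dF_j+\mathcal{L}_{X_j}\lambda-d\,\iota_{X_j}\lambda=d\tilde F_j$, and $\tilde F|_{z=0}=F$. As a consistency check against Definition \ref{def:flatness}, the flatness data yield $\sum_j a_{kj}\tilde F_j=b_k-z_k$, so the disk coordinates precisely ``unfreeze'' the affine relations and $M\times\{0\}=\tilde F^{-1}(L)$ is recovered as a flat slice.

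For uniqueness, given an equivariant presymplectic embedding $\phi\colon M\to(\hat M^{2(n+d)},\hat\omega)$, I first note that $\phi(M)$ is coisotropic, because its codimension $d$ equals the corank of $\phi^*\hat\omega=\omega$, and its characteristic distribution is $\phi_*\ker\omega$. Using $\hat\omega$ to identify the normal bundle of $\phi(M)$ with $(\ker\omega)^*\cong M\times\mathbb{R}^d$, the equivariant tubular neighborhood theorem (again by averaging over the torus) produces an equivariant diffeomorphism $\Psi$ from a neighborhood of the zero section onto a neighborhood of $\phi(M)$ extending $\phi$, chosen so that $\Psi^*\hat\omega$ and $\tilde\omega$ agree along $M\times\{0\}$ together with their first transverse derivatives. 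Then $\Psi^*\hat\omega$ and $\tilde\omega$ are two invariant symplectic forms agreeing to first order on the compact locus $M\times\{0\}$, and the equivariant Moser trick—interpolating $\tilde\omega_t=\tilde\omega+t(\Psi^*\hat\omega-\tilde\omega)$ and solving $\iota_{V_t}\tilde\omega_t=-\mu_t$ with an invariant primitive $\mu_t$ obtained by averaging—yields an equivariant $\chi$ fixing $M\times\{0\}$ with $\chi^*\Psi^*\hat\omega=\tilde\omega$. The composite $\Psi\circ\chi$ is the desired equivariant symplectic extension of $\phi$; equivariance forces the two momentum maps to differ by a constant, which vanishes since they coincide on $M$.

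I expect the main obstacle to be this equivariant coisotropic neighborhood step: arranging $\Psi$ so that $\Psi^*\hat\omega$ matches $\tilde\omega$ to first order along $M$ in a $\mathbb{T}^{q+d}$-invariant fashion, so that the Moser deformation stays within symplectic forms and the flow integrates on a common neighborhood. This is where averaging over the compact torus and exact control of the transverse symplectic data on the normal bundle are essential. By contrast, the existence half is comparatively soft once the preceding proposition supplies the everywhere-independent invariant framing $Y_1,\dots,Y_d$ of $\ker\omega$: the tautological form $\pi^*\omega+d\lambda$ supplies closedness for free, and the identity $\iota_{Y_k}d\sigma_i=0$ delivers the constant-corank condition (i) with essentially no computation.
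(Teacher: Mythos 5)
Your proposal is correct and follows essentially the same route as the paper: the same tautological thickening $\tilde\omega=\pi^*\omega+d\bigl(\sum_i z_i\,\pi^*\sigma_i\bigr)$ with an invariant coframe dual to $Y_1,\dots,Y_d$ (the paper builds the $\sigma_i$ from an invariant metric rather than by averaging, and writes the same form as $\sum_i dh_i\wedge\alpha_i+\sum_i h_i\,d\alpha_i+\mathrm{proj}^*\omega$), the trivially lifted torus action, and uniqueness via the equivariant Gotay coisotropic embedding theorem proved by the equivariant Moser path method. Your explicit formula $\tilde F_j=F_j-\iota_{X_j}\lambda$ is a harmless refinement of the paper's cohomological argument for the action being Hamiltonian.
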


\begin{proof} Put an arbitrary $\mathbb{T}^{q+d}$-invariant 
Riemannian metric $g$ on $M$. At each point $y \in M$ denote by 
$V_y = (\ker \omega (y))^\perp \subset T_yM$ the 
$2n$-dimensional subspace of the tangent space
$T_yM$ which is $g$-orthogonal to $\ker \omega(y)$. Then the 
distribution $\mathcal{V}= \{V_y \mid y \in M\}$ is smooth 
and invariant with respect to the $\mathbb{T}^{q+d}$-action. 
For each $i=1,\hdots,d$, define the 1-form $\alpha_i$ on $M$ 
by 
\begin{equation}
\alpha_i(Y_i) = 1, \quad \alpha_i(Y_j) = 0,\; \forall j \neq i,
\quad \alpha_i (V) = 0, \forall V \text{ a section of } \mathcal{V}.
\end{equation}
Then put
\begin{equation}
\tilde{\omega} = \sum_{i=1}^d dh_i \wedge \alpha_i + 
\sum_{i=1}^n h_i d\alpha_i + proj^* \omega,
\end{equation}
where $(h_1,\hdots, h_d)$ is a coordinate system on $D^d$ 
which vanishes at $O$, and $proj: M \times D^d \to M$ is the 
natural projection onto $M$.

Lift the  $\mathbb{T}^{q+d}$-action from $M$ to $M \times D^d$ 
by making it acting trivially on $D^d$.

It is clear that $\omega$ is closed and non-degenerate (if the 
radius of the  disk $D^d$ is small) and invariant with respect 
to the  $\mathbb{T}^{q+d}$-action, which shows that  
this is action is symplectic. This symplectic action is 
actually Hamiltonian for cohomological reasons. Indeed, when 
restricted to $M \longleftarrow M \times \{O\}$, the first 
cohomology class of $\tilde{\omega}(X_i,\cdot ) = 
\omega(X_i,\cdot ) = {\rm d}F_i$ is trivial, so on 
$M \times D^d$ the cohomology class of 
$\tilde{\omega}(X_i,\cdot )$ is also 
trivial by homotopy, and hence $\tilde{\omega}(X_i,\cdot ) = 
{\rm d} \tilde{F_i}$ for some $\tilde{F_i}$ which can be 
chosen to  equal $F_i$ on $M \cong M \times \{O\}$. 

Note that, by construction, we have $X_{H_i} = Y_i$, where 
$H_i = \sum_{j=1}^q a_{ij}\tilde{F}_j$ is constant on 
$M$ for each $i=1,\hdots, d.$ 

To show uniqueness of the local symplectization, we 
invoke Gotay's coisotropic embedding theorem 
\cite{Gotay_Presymplectic1982},
or, more precisely, its equivariant version, which is proved 
using the equivariant Moser path method. We remark that, if we 
forget about the torus action, then the situation studied by Gotay 
is more general than ours, because, in his case, the isotropic 
tangent vector bundle can be non-parallelizable, while in our case 
this bundle is parallelizable (precisely because of the torus 
action).
\end{proof}

\subsection{Normal form near an orbit of the torus action} \hfill

In this subsection we recall the normal form theorem, due to 
Marle \cite{Marle_NF1984} and Guillemin--Sternberg 
\cite{GuSt1984}, for a Hamiltonian torus $\mathbb{T}^k$-action 
on a symplectic manifold in the neighborhood of an orbit of the 
action, and then adapt this theorem to our presymplectic case 
using the Symplectization Theorem 
\ref{thm:LocalSymplectization}; 
see \cite[Chapter 7]{OrRa2004} for the details and proofs of 
the well-known results stated in this subsection. 

Let us start with the following simplified (Hamiltonian 
instead of symplectic) version of the so-called Witt--Artin 
decomposition, which was first proved by Witt \cite{Witt1937} 
for symmetric bilinear forms. Fix a point $m$ in a symplectic 
manifold $M$ with a Hamiltonian $\mathbb{T}^k$-action. Since 
the torus action is Hamiltonian, every orbit is isotropic.
We split $\mathfrak{t}$, the Lie algebra of $\mathbb{T}^k$, 
into the direct sum of two summands, 
\begin{equation}
\label{splitting of lie algebras for slice} 
\mathfrak{t}= \mathfrak{t}_{m}\oplus \mathfrak{m},
\end{equation}
where $\mathfrak{m}$ is the orthogonal complement 
of $\mathfrak{t}_{m}$ in $\mathfrak{t}$ with respect to some 
positive definite inner product  $\langle\cdot,\cdot\rangle$. 
The splitting in \eqref{splitting of lie algebras for slice} 
induces a similar one on the dual 
\begin{equation}
\label{splitting of lie algebras for slice dual}
\mathfrak{t}^\ast =
\mathfrak{t}_{m}^\ast \oplus \mathfrak{m}^\ast,
\end{equation}
where $\mathfrak{t}_m^* =\{ \langle \eta, \cdot
\rangle\mid \eta \in \mathfrak{t}_m\}$ and
$\mathfrak{m}^* =\{ \langle \xi, \cdot
\rangle\mid  \xi \in \mathfrak{m}\}$.

We use the following notation. If $(V, \Omega)$ is a symplectic
vector space and $S \subset V$ is an arbitrary subset, then 
$S^\Omega = \{v\in V \mid \Omega(v, s) = 0, \,\forall x \in 
S\}$ denotes the $\Omega$-\textbf{\textit{orthogonal 
complement}} of $S$ in $V$. Note that $S^\Omega$ is a vector 
subspace of $V$.

If $G$ is a Lie group acting on a manifold $N$ whose Lie 
algebra is denoted by $\mathfrak{g}$, then the tangent space 
to the orbit $G\cdot n \subset N$ equals $\mathfrak{g}\cdot n 
= \{\xi_N(n) \mid \xi\in \mathfrak{g}\}$, where $\xi_N(n) = 
\left.\frac{d}{dt}\right|_{t=0} \exp(t \xi) \cdot n$ is the
value at $n$ of the infinitesimal generator vector field 
$\xi_N$ defined by $\xi\in \mathfrak{g}$.

\begin{theorem}[Witt--Artin decomposition]
\label{Witt--Artin decomposition statement}
Let $(M, \omega)$ be a symplectic manifold together with a 
Hamiltonian $\mathbb{T}^k$-action. Then for any point $m\in M$ 
we have
\begin{equation}
\label{witt decomposition expression}
T _m M= \mathfrak{t} \cdot m \oplus V\oplus W,
\end{equation}
where:
\begin{itemize}
\item [(i)] $V$ is the orthogonal complement to
$\mathfrak{t}\cdot m$ in $(\mathfrak{t}\cdot m) ^{\omega(m)}$
with respect to a $\mathbb{T}^k_{m}$-invariant inner product 
$\ll\! \cdot ,\! \cdot\! \gg$ in $T _mM$.
The subspace $V$ is a symplectic $\mathbb{T}^k_{m}$-invariant 
subspace of $(T_mM, \omega (m))$.

\item [(ii)] $ \mathfrak{t}\cdot m:=\{ \xi_M (m)\mid \xi\in 
\mathfrak{t}\} $ is a Lagrangian subspace of  $V^{\omega (m)}$.

\item [(iii)] $W $ is a $\mathbb{T}^k_{m}$-invariant 
Lagrangian complement to $\mathfrak{t}\cdot m $ in 
$V^{\omega (m)}$.

\item [(iv)] The map $f:W \rightarrow \mathfrak{m}^\ast$ 
defined by 
\[
\langle f (w), \eta\rangle:= \omega(m)(\eta_M (m), w), \quad 
\text{ for all } \quad \eta \in \mathfrak{m} 
\]
is a $\mathbb{T}^k_{m}$-equivariant isomorphism.
\end{itemize}
\end{theorem}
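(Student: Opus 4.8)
The plan is to carry out the entire construction at the level of the single symplectic vector space $(E,\Omega):=(T_mM,\omega(m))$, building the three summands successively by linear symplectic algebra and arranging $\mathbb{T}^k_m$-invariance at each stage by averaging over the compact isotropy group. Write $T:=\mathfrak{t}\cdot m$. The one piece of input beyond linear algebra is the fact, already recorded above, that the Hamiltonian hypothesis forces every orbit to be isotropic: for an abelian group the Poisson brackets of the momentum components vanish, so $\Omega(\xi_M(m),\eta_M(m))=0$ for all $\xi,\eta\in\mathfrak{t}$, i.e. $T\subset T^\Omega$. Throughout I use the standard identities $(T^\Omega)^\Omega=T$, $\dim T+\dim T^\Omega=\dim E$, and $(A+B)^\Omega=A^\Omega\cap B^\Omega$.

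First I would fix a $\mathbb{T}^k_m$-invariant inner product $\ll\!\cdot,\cdot\!\gg$ on $E$ (average any inner product over the compact group $\mathbb{T}^k_m$) and define $V$ as the $\ll\!\cdot,\cdot\!\gg$-orthogonal complement of $T$ inside $T^\Omega$, so that $T^\Omega=T\oplus V$; since $T$ and $\Omega$ are invariant, so are $T^\Omega$ and $V$. To prove $V$ is symplectic, as in (i), I would compute the radical of $\Omega|_{T^\Omega}$, which equals $T^\Omega\cap(T^\Omega)^\Omega=T^\Omega\cap T=T$ because $T$ is isotropic. Thus $\Omega$ descends to a nondegenerate form on $T^\Omega/T$, and since the projection $V\to T^\Omega/T$ is an isomorphism, $\Omega|_V$ is nondegenerate.

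Next I would treat (ii) and (iii). As $V$ is symplectic we have $E=V\oplus V^\Omega$ with $V^\Omega$ again symplectic, and from $T+V=T^\Omega$ one gets $T^\Omega\cap V^\Omega=(T+V)^\Omega=(T^\Omega)^\Omega=T$, so $T\subset V^\Omega$. Counting dimensions, $\dim V^\Omega=\dim E-\dim V=2\dim T$, so the isotropic subspace $T$ is Lagrangian in $(V^\Omega,\Omega)$, which is (ii). For (iii) I would produce an invariant Lagrangian complement $W$ to $T$ in $V^\Omega$ by choosing a $\mathbb{T}^k_m$-invariant $\Omega$-compatible complex structure $J$ on $V^\Omega$ (from the invariant metric, via averaging or polar decomposition) and setting $W:=J(T)$; then $W$ is Lagrangian, transverse to $T$, and invariant. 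Combining $E=V\oplus V^\Omega$ with $V^\Omega=T\oplus W$ yields the decomposition $E=T\oplus V\oplus W$.

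For (iv), note that $\eta\mapsto\eta_M(m)$ has kernel exactly $\mathfrak{t}_m$ on $\mathfrak{t}$, hence restricts to an isomorphism $\mathfrak{m}\xrightarrow{\sim}T$, so $\dim W=\dim T=\dim\mathfrak{m}^\ast$. Injectivity of $f$ is the crux: $f(w)=0$ means $\Omega(t,w)=0$ for all $t\in T$, i.e. $w\in T^\Omega\cap V^\Omega=T$, whence $w\in W\cap T=\{0\}$, and the dimension match upgrades this to an isomorphism. Equivariance is where I would use that $\mathbb{T}^k$ is abelian: since $\mathbb{T}^k_m$ fixes $m$ and $\mathrm{Ad}$ is trivial on a torus, $g\cdot\eta_M(m)=\eta_M(m)$, so $\Omega$-invariance gives $\langle f(g\cdot w),\eta\rangle=\Omega(\eta_M(m),w)=\langle f(w),\eta\rangle$, i.e. $f$ intertwines the isotropy representation on $W$ with the trivial action on $\mathfrak{m}^\ast$. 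I expect the only real obstacle to be the simultaneous bookkeeping of the symplectic-orthogonal complements $\cdot^\Omega$ against the metric-orthogonal complements while maintaining $\mathbb{T}^k_m$-invariance at every step, in particular checking that $W=J(T)$ is at once Lagrangian and invariant; the remaining identities are routine linear algebra.
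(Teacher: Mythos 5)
Your argument is correct. The paper itself does not prove this theorem; it states it as a known result and defers to Ortega--Ratiu \cite{OrRa2004} for the proof, so there is no in-paper argument to compare against line by line. What you have written is a sound, self-contained version of the standard construction, correctly specialized to the torus case: the orbit is isotropic (the paper asserts this, and it holds because $\omega(\xi_M,\eta_M)$ is constant along each compact orbit and vanishes at a critical point of the momentum component restricted to the orbit); the radical computation $T^\Omega\cap(T^\Omega)^\Omega=T$ gives that $V$ is symplectic; the identity $T^\Omega\cap V^\Omega=(T+V)^\Omega=T$ plus the dimension count makes $T$ Lagrangian in $V^\Omega$; $W=J(T)$ for an invariant compatible $J$ is an invariant Lagrangian complement; and the kernel computation for $f$ together with $\dim W=\dim\mathfrak{m}^\ast$ gives the isomorphism in (iv). The abelian hypothesis does real work exactly where you invoke it: $\mathrm{Ad}$ is trivial, so $\mathbb{T}^k_m$ fixes each $\eta_M(m)$ pointwise, which is what makes both the invariance of $T$ and the equivariance of $f$ (with trivial action on $\mathfrak{m}^\ast$) immediate, whereas the general Witt--Artin statement in the reference must carry the (co)adjoint twists. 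The only cosmetic gap is the one-line justification of orbit isotropy, which the paper also takes for granted.
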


The space $V$ in Theorem 
\ref{Witt--Artin decomposition statement}
is called a \textbf{\textit{symplectic normal space}}
at $m$. The $\mathbb{T}_{m}$-action on $(V, \omega(m)|_{V})$ 
is linear Hamiltonian and has a standard associated momentum 
map $J_V:V \rightarrow \mathfrak{t}_{m}^\ast$ given 
by 
\begin{equation}
\label{V_momentum}
\langle J_V(v),\xi\rangle = \frac{1}{2}\omega(m)(\xi\cdot v,v),
\end{equation}
where $\xi \cdot v = \xi_V(v)$.

Let $(M, \omega)$ be a symplectic manifold together with a 
Hamiltonian $\mathbb{T}^k$-action and $m \in M$. Let $V$ be a 
symplectic normal space at $m$ and $\mathfrak{m} \subset 
\mathfrak{t}$ the subspace introduced in the splitting
\eqref{splitting of lie algebras for slice}.
Define the smooth manifold 
\begin{equation}
\label{marle tube canonical}
Y_r= \mathbb{T}^k \times_{\mathbb{T}^k_{m} } 
\left(\mathfrak{m}^\ast_r\times V _r\right)
\end{equation}
as the quotient of the product $\mathbb{T}^k \times 
\left(\mathfrak{m}^\ast_r\times V _r\right)$ by the 
$\mathbb{T}^k_{m}$-action defined by $h \cdot (t,\alpha, v)=
(th,\alpha, h^{-1}\cdot v)$ for any $h \in \mathbb{T}^k_{m}$, 
$t\in\mathbb{T}^k$, $\alpha\in\mathfrak{m}^\ast$, and $v\in V$.
Let $\pi:\mathbb{T}^k \times\left(\mathfrak{m}^\ast_r\times 
V _r\right) \rightarrow  \mathbb{T}^k\times_{\mathbb{T}^k_{m} } 
\left(\mathfrak{m}^\ast_r\times V _r\right)$ be the projection.
The torus $\mathbb{T}^k$ acts on $Y_r$ by the formula
$g\cdot [h, \eta, v]:=[gh, \eta,v]$, 
for any $g \in\mathbb{T}^k$ and any $[h,\eta, v] \in Y_r$.

There exist $\mathbb{T}^k_{m}$-invariant disks 
$\mathfrak{m}^\ast_r \subset \mathfrak{m}^*$ and 
$V_r \subset V$ of some small radius $r>0$\, centered at the 
origin, such that $Y_r$ is a symplectic manifold  with the 
$\mathbb{T}^k$-invariant symplectic two-form $\omega_{Y_r}$  defined by
\begin{align}
\omega_{Y _r }&([g, \rho,v])(T_{(g, \rho, v)}\pi(T _e L _g (\xi_1), 
\alpha_1, u_1),
T_{(g, \rho, v)}\pi(T _e L _g (\xi_2), \alpha_2, u _2))\notag\\
&=\langle \alpha_2+ T _v J_V (u _2), \xi_1\rangle-
\langle \alpha_1+ T _v J_V ( u _1), \xi_2\rangle +  
\omega(m)(u _1, u _2),
\label{symplectic form in the symplectic tube}
\end{align}
where $[g,\rho,v]  \in Y _r$, $\xi_1, \xi_2 \in \mathfrak{t}$, 
$\alpha_1, \alpha_2 \in\mathfrak{m}^\ast$, and $u_1, u_2 \in V$.

The symplectic manifold $(Y _r, \omega_{Y _r})$ constructed above
is called a \textbf{\textit{symplectic tube}} of $(M, \omega)$ at the 
point $m$ with respect to the Hamiltonian torus action. 
The importance of this symplectic tube is in the fact that it 
models the symplectic manifold $(M,\,\omega)$ as a Hamiltonian 
$\mathbb{T}^k$-space in a neighborhood of the orbit 
$\mathbb{T}^k\cdot m$.

\begin{theorem}[Symplectic Slice Theorem]
\label{normal form for canonical actions}
Let $(M, \omega)$ be a symplectic manifold together with a 
Hamiltonian $\mathbb{T}^k$-action. Let $m\in M$ and let 
$(Y_r, \omega_{Y_r})$ 
be the $\mathbb{T}^k$-symplectic tube at that point constructed 
above.  
Then there is a $\mathbb{T}^k$-invariant neighborhood
$U$ of $m$ in $M$ and a $\mathbb{T}^k$-equivariant 
symplectomorphism $\phi:U\rightarrow Y_r$ satisfying 
$\phi(m)=[e,\,0,\,0]$.
\end{theorem}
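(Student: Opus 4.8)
The plan is to reduce the statement to an equivariant Moser-type argument, following the classical Marle--Guillemin--Sternberg scheme. First I would produce a $\mathbb{T}^k$-equivariant diffeomorphism $\psi$ from a $\mathbb{T}^k$-invariant neighborhood $U$ of $m$ onto (a neighborhood of the zero orbit in) $Y_r$ that matches only the smooth structures and the torus actions, postponing any statement about the symplectic forms. To this end, choose a $\mathbb{T}^k$-invariant Riemannian metric on $M$ (obtained by averaging an arbitrary one over the compact torus) and use its equivariant exponential map to identify a neighborhood of the orbit $\mathbb{T}^k \cdot m$ with a neighborhood of the zero section of the normal bundle $\nu = T(\mathbb{T}^k \cdot m)^\perp$. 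The Witt--Artin decomposition of Theorem \ref{Witt--Artin decomposition statement} gives a $\mathbb{T}^k_m$-equivariant identification $T_m M / (\mathfrak{t}\cdot m) \cong V \oplus W \cong V \oplus \mathfrak{m}^*$, the last isomorphism being the map $f$ of part (iv); the associated bundle construction then identifies $\nu$ with $\mathbb{T}^k \times_{\mathbb{T}^k_m}(\mathfrak{m}^* \times V)$, which is exactly the underlying manifold of $Y_r$. After shrinking $r$, this yields the desired equivariant $\psi$ with $\psi(m) = [e, 0, 0]$.

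Next I would compare the two symplectic forms $\omega_0 := \psi^* \omega_{Y_r}$ and $\omega_1 := \omega$ on $U$. Both are closed and $\mathbb{T}^k$-invariant by construction. The crucial point is that they agree along the orbit $Z := \mathbb{T}^k \cdot m$: at the point $m$ the defining formula \eqref{symplectic form in the symplectic tube} for $\omega_{Y_r}$ reproduces, through the Witt--Artin splitting and the normalization of $J_V$ in \eqref{V_momentum}, precisely the bilinear form $\omega(m)$ on $T_m M = \mathfrak{t}\cdot m \oplus V \oplus W$; equivariance then propagates the equality $\omega_0 = \omega_1$ from $m$ to all of $Z$. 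Consequently the interpolating family $\omega_t := \omega_0 + t(\omega_1 - \omega_0)$, $t \in [0,1]$, consists of closed invariant two-forms that are nondegenerate in a (possibly smaller) invariant neighborhood of $Z$, since nondegeneracy is an open condition and holds along $Z$ for every $t$.

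Then I would run the relative equivariant Moser argument. Because $\mu := \omega_1 - \omega_0$ is closed and vanishes along $Z$, the relative Poincar\'e lemma supplies a one-form $\sigma$ with $d\sigma = \mu$ and $\sigma$ vanishing along $Z$; averaging $\sigma$ over $\mathbb{T}^k$ makes it invariant without destroying these properties. Solving $\iota_{X_t}\omega_t = -\sigma$ defines a time-dependent invariant vector field $X_t$ that vanishes along $Z$, so its flow $\phi_t$ fixes $Z$ pointwise and is defined for $t\in[0,1]$ near $Z$. Cartan's formula together with $d\omega_t = 0$ and $\tfrac{d}{dt}\omega_t = \mu = d\sigma$ gives $\tfrac{d}{dt}(\phi_t^*\omega_t) = \phi_t^*(\mathcal{L}_{X_t}\omega_t + \mu) = \phi_t^*(-d\sigma + d\sigma) = 0$, whence $\phi_1^*\omega_1 = \omega_0 = \psi^*\omega_{Y_r}$. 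Setting $\phi := \psi \circ \phi_1^{-1}$ produces a $\mathbb{T}^k$-equivariant map with $\phi^*\omega_{Y_r} = \omega$ and $\phi(m) = [e,0,0]$, as required.

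I expect the main obstacle to be the second step: verifying, through the somewhat implicit formula \eqref{symplectic form in the symplectic tube}, that $\psi$ can be arranged so that $\psi^*\omega_{Y_r}$ and $\omega$ genuinely coincide along the whole orbit and not merely at $m$ up to a lower-order error. Getting this matching to hold exactly along $Z$ is what makes $\mu$ vanish along $Z$ and hence allows both the relative Poincar\'e lemma and the nondegeneracy of the family $\omega_t$; if one only had agreement at the single point $m$, the Moser scheme would not close up. Equivariance of every construction is routine thanks to the compactness of $\mathbb{T}^k$ (averaging), so it is precisely this exact first-order normalization along the orbit that carries the real content.
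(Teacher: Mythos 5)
The paper does not actually prove this theorem --- it is quoted as a well-known result, with the proof deferred to \cite[Chapter 7]{OrRa2004} --- but your argument is precisely the standard Marle--Guillemin--Sternberg proof found there: an equivariant tubular neighborhood built from the Witt--Artin splitting and the isomorphism $f:W\to\mathfrak{m}^*$, exact agreement of the two forms along the orbit (which formula \eqref{symplectic form in the symplectic tube} is engineered to deliver, since $T_0J_V=0$), and then the equivariant relative Moser/Poincar\'e-lemma deformation. Your proposal is correct and takes essentially the same route as the cited proof.
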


\begin{theorem}[The Marle--Guillemin--Sternberg normal form]
\label{thm:SymplecticNF}
Let $(M, \omega)$ be a connected symplectic manifold with a 
Hamiltonian torus $\mathbb{T}^k$-action and associated momentum 
map $F:M \rightarrow \mathbb{R}^k$.
Let $m \in M$ and $(Y_r, \omega_{Y_r})$ be the symplectic tube at 
$m$ constructed above that  models a $\mathbb{T}^k$-invariant
open neighborhood $U$ of the orbit $\mathbb{T}^k \cdot m$ via the 
$\mathbb{T}^k$-equivariant symplectomorphism $\phi:(U, \omega|_U) 
\rightarrow (Y_r, \omega_{Y_r})$. Then the momentum map 
$F_{Y _r} = F|_U \circ \phi^{-1}:Y _r \rightarrow \mathbb{R}^k$ 
of the Hamiltonian $\mathbb{T}^k$-action on $(Y_r, \omega_{Y_r})$ 
has the expression 
\begin{equation}
\begin{array}{cccc}
\label{marle normal form of the momentum}
F_{Y _r}: &Y _r=\mathbb{T}^k \times_{\mathbb{T}^k_{m}} 
(\mathfrak{m}^\ast_r \times V _r)& \longrightarrow
&\mathbb{R}^k\hfill\\
&\hfill [g, \rho, v]&\longmapsto&  F(m)+ \rho+ J_V (v),
\end{array}
\end{equation}
where $J_V: V \rightarrow \mathfrak{t}_m ^\ast $ is given by
\eqref{V_momentum}.
\end{theorem}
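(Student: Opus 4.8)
The plan is to verify the formula by directly computing the momentum map from the explicit symplectic form \eqref{symplectic form in the symplectic tube}, exploiting the fact that momentum maps for a torus action are unique up to an additive constant in $\mathfrak{t}^\ast$. First I would observe that $F_{Y_r} = F|_U \circ \phi^{-1}$ is automatically \emph{a} momentum map for the $\mathbb{T}^k$-action on $(Y_r, \omega_{Y_r})$: since $\phi$ is a $\mathbb{T}^k$-equivariant symplectomorphism, it intertwines the infinitesimal generators ($\phi_\ast \xi_U = \xi_{Y_r}$) and pulls $\omega_{Y_r}$ back to $\omega|_U$, so the defining relation $\iota_{\xi_{Y_r}}\omega_{Y_r} = \mathrm{d}\langle F_{Y_r}, \xi\rangle$ follows from $\iota_{\xi_U}\omega|_U = \mathrm{d}\langle F|_U, \xi\rangle$ by pulling back along $\phi^{-1}$. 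Thus the whole theorem reduces to identifying this momentum map with the candidate $\Phi([g,\rho,v]) := F(m) + \rho + J_V(v)$, regarded as taking values in $\mathfrak{t}^\ast = \mathfrak{t}_m^\ast \oplus \mathfrak{m}^\ast$ via \eqref{splitting of lie algebras for slice dual}.

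Next I would check that $\Phi$ actually descends to the associated bundle $Y_r$. Well-definedness requires $\mathbb{T}^k_m$-invariance: under the structure action $h\cdot(t,\alpha,v) = (th,\alpha,h^{-1}\cdot v)$ the coordinate $\rho \in \mathfrak{m}^\ast$ is fixed, while $J_V(h^{-1}\cdot v) = J_V(v)$ because $J_V$ is $\mathbb{T}^k_m$-equivariant and the coadjoint action of the torus on $\mathfrak{t}_m^\ast$ is trivial; and $F(m)$ is a constant. Hence $\Phi$ is a well-defined smooth $\mathbb{T}^k$-invariant (in the momentum sense) map on $Y_r$.

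The heart of the argument is then the verification that $\Phi$ is a momentum map, i.e. $\iota_{\xi_{Y_r}}\omega_{Y_r} = \mathrm{d}\langle \Phi, \xi\rangle$ for every $\xi \in \mathfrak{t}$. Because the group is abelian, the generator at $[g,\rho,v]$ is represented through $T\pi$ by the triple $(T_eL_g(\xi), 0, 0)$. Inserting this into the first slot of \eqref{symplectic form in the symplectic tube} (so $\xi_1 = \xi$, $\alpha_1 = 0$, $u_1 = 0$) and pairing against a general tangent vector represented by $(T_eL_g(\xi_2), \alpha_2, u_2)$, the formula collapses to $\langle \alpha_2 + T_v J_V(u_2), \xi\rangle$. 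On the other side, differentiating $\langle\Phi,\xi\rangle = \langle F(m),\xi\rangle + \langle \rho,\xi\rangle + \langle J_V(v),\xi\rangle$ in the same direction yields $\langle \alpha_2, \xi\rangle + \langle T_v J_V(u_2), \xi\rangle$, since the value is independent of $g$ and $F(m)$ is constant. The two expressions coincide, so $\Phi$ is a momentum map.

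Finally, since $Y_r$ is connected, two momentum maps for the same Hamiltonian torus action differ by a constant in $\mathfrak{t}^\ast$; evaluating both $F_{Y_r}$ and $\Phi$ at the base point $[e,0,0] = \phi(m)$ gives $F(m)$ in each case, so the constant vanishes and $F_{Y_r} = \Phi$, as claimed. I expect the only delicate points to be purely bookkeeping: correctly identifying tangent vectors to the associated bundle $Y_r$ through $T\pi$ and keeping the two direct-sum decompositions of $\mathfrak{t}^\ast$ straight, together with the repeated use of the torus being abelian (trivial coadjoint action), which is precisely what makes both the descent of $\Phi$ and the contraction computation come out cleanly.
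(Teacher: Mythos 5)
Your argument is correct. The paper does not actually prove this theorem --- it is stated as a known result with the proof deferred to \cite[Chapter 7]{OrRa2004} --- but your computation is precisely the standard one found there: given the Symplectic Slice Theorem (which the statement presupposes through the existence of the equivariant symplectomorphism $\phi$), one checks that $[g,\rho,v]\mapsto F(m)+\rho+J_V(v)$ descends to $Y_r$, contracts the generators into \eqref{symplectic form in the symplectic tube} to see it is a momentum map, and then invokes uniqueness up to a constant on the connected space $Y_r$ together with agreement at $[e,0,0]=\phi(m)$. All the steps you flag as bookkeeping (triviality of the coadjoint action, $T_eR_g=T_eL_g$ for the abelian group, the identification $\mathfrak{t}^\ast=\mathfrak{t}_m^\ast\oplus\mathfrak{m}^\ast$) do go through.
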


The above normal forms plus the symplectization theorem 
(Theorem \ref{thm:LocalSymplectization})
yields the following normal form around an orbit in the 
presymplectic case under the flatness condition for the momentum 
map.

\begin{theorem}[Presymplectic normal form under the flatness condition]
\label{thm:PresymplecticNF}
Let $(M^{2n+d}, \omega)$ be a connected presymplectic manifold, whose
presymplectic form $\omega$ has constant corank $d$, with a 
Hamiltonian torus $\mathbb{T}^{q+d}$-action and associated momentum 
map $F:M \rightarrow \mathbb{R}^{q+d}$ which satisfies the flatness 
condition (see Definition \ref{def:flatness}).
Let $m \in M$ and $(Y_r, \omega_{Y_r})$ be the symplectic tube at 
$m$ constructed above that  models a $\mathbb{T}^{q+d}$-invariant
open neighborhood $U$ of the orbit $\mathbb{T}^{q+d} \cdot m$ 
in the symplectization $(\hat{M}^{2n+2d}, \hat{\omega})$ of 
$(M^{2n+d}, \omega)$   via the 
$\mathbb{T}^{q+d}$-equivariant symplectomorphism $\phi:(U, \omega|_U) 
\rightarrow (Y_r, \omega_{Y_r})$. Then the momentum map 
$$
F_{Z_r} = F|_{U} \circ \phi^{-1}|_{Z_r}:Z_r 
\rightarrow \mathbb{R}^{q+d}
$$
of the Hamiltonian $\mathbb{T}^{q+d}$-action on 
$Z_r = Y_r \cap \phi(M^{2n+d})$ 
has the expression 
\begin{equation}
\begin{array}{cccc}
\label{presymplectic normal form of the momentum}
F_{Z_r}: &Z_r=\mathbb{T}^{q+d} \times_{\mathbb{T}^{q+d}_{m}} 
B_r& \longrightarrow
&\mathbb{R}^{q+d}\hfill\\
&\hfill [g, \rho, v]&\longmapsto&  F(m)+ \rho+ J_V (v),
\end{array}
\end{equation}
where $J_V: V \rightarrow \mathfrak{t}_m ^\ast $ is given by
\eqref{V_momentum} and 
$B_r \subset \mathfrak{m}^\ast_r \times V _r$ consists
of points $(\rho,v)$ such that $\rho + J_V (v) \in \mathfrak{l}$,
where $\mathfrak{l}$ is a $q$-dimensional linear subspace 
of $\mathfrak{t}^\ast \cong \mathbb{R}^{q+d}$  such that 
$\mathfrak{l} + \mathfrak{m}^\ast = \mathfrak{t}^\ast$.
\end{theorem}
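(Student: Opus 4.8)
The plan is to deduce this presymplectic normal form directly from the symplectic Marle--Guillemin--Sternberg normal form (Theorem \ref{thm:SymplecticNF}) applied to the symplectization. First I would invoke Theorem \ref{thm:LocalSymplectization} to realize $(M^{2n+d},\omega)$ as the flat slice $M \cong M \times \{O\}$ inside the symplectic manifold $(\hat M^{2n+2d}, \hat\omega) = (M \times D^d, \tilde\omega)$, carrying the lifted Hamiltonian $\mathbb{T}^{q+d}$-action with momentum map $\hat F = \tilde F$ extending $F$. Since $m \in M \times \{O\}$, we have $\hat F(m) = F(m)$. Applying the symplectic slice theorem (Theorem \ref{normal form for canonical actions}) and Theorem \ref{thm:SymplecticNF} to $\hat M$ at $m$ produces the symplectic tube $Y_r = \mathbb{T}^{q+d}\times_{\mathbb{T}^{q+d}_m}(\mathfrak{m}^*_r \times V_r)$, the equivariant symplectomorphism $\phi$, and the momentum map $F_{Y_r}([g,\rho,v]) = F(m) + \rho + J_V(v)$. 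The whole task then reduces to identifying the subset $Z_r = Y_r \cap \phi(M^{2n+d})$ inside this model.

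The key geometric observation is that, inside the symplectization, $M$ is exactly the flat slice cut out by the affine subspace $L$ of Definition \ref{def:flatness}. Indeed, by the construction in Theorem \ref{thm:LocalSymplectization} the functions $H_i = \sum_{j=1}^{q+d} a_{ij}\tilde F_j$ satisfy $X_{H_i} = Y_i$ and are constant on $M$; combined with the flatness relations \eqref{eqn:flatF} this gives $H_i|_{M} = b_i$ and shows that, in a neighborhood of $M$, one has $M = \hat F^{-1}(L) = \{H_1 = b_1, \ldots, H_d = b_d\}$, exactly as in the Flat slice example. Transporting this through $\phi$ and using $F_{Y_r}([g,\rho,v]) = F(m)+\rho+J_V(v)$, the condition $\hat F \in L$ becomes $F(m)+\rho+J_V(v) \in L$. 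Since $F(m) \in L$, subtracting $F(m)$ turns the affine condition into the linear one $\rho + J_V(v) \in \mathfrak{l}$, where $\mathfrak{l} = L - F(m) = \{\xi \in \mathfrak{t}^* \mid \sum_{j} a_{ij}\xi_j = 0,\ i=1,\ldots,d\}$ is the $q$-dimensional direction space of $L$. This yields $Z_r = \mathbb{T}^{q+d}\times_{\mathbb{T}^{q+d}_m} B_r$ with $B_r = \{(\rho,v) \in \mathfrak{m}^*_r\times V_r \mid \rho+J_V(v)\in\mathfrak{l}\}$ and $F_{Z_r} = F_{Y_r}|_{Z_r}$, which is the asserted expression. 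One should also check that $B_r$ is genuinely $\mathbb{T}^{q+d}_m$-invariant so that the associated bundle is well defined: this holds because the coadjoint action of the abelian group is trivial and $J_V$ is equivariant, so $\rho + J_V(v)$ is $\mathbb{T}^{q+d}_m$-invariant.

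It remains to verify the transversality condition $\mathfrak{l} + \mathfrak{m}^* = \mathfrak{t}^*$. I would argue by duality: since $\mathfrak{m}^*$ is the annihilator of the isotropy algebra $\mathfrak{t}_m$ and $\mathfrak{l}$ is the annihilator of the span of the coefficient vectors $\mathbf{a}_i = (a_{i1},\ldots,a_{i,q+d}) \in \mathfrak{t}$ (which generate $Y_1,\ldots,Y_d$), the identity $U^\circ + W^\circ = (U\cap W)^\circ$ reduces the claim to $\mathrm{span}(\mathbf{a}_1,\ldots,\mathbf{a}_d) \cap \mathfrak{t}_m = \{0\}$. If $\sum_i c_i \mathbf{a}_i \in \mathfrak{t}_m$ then its generator $\sum_i c_i Y_i$ vanishes at $m$, forcing $c_i = 0$ by the linear independence of $Y_1(m),\ldots,Y_d(m)$ established in the proposition on the kernel of $\omega$. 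I expect the main obstacle to be neither of these linear-algebra verifications but rather the second step: rigorously pinning down that the intersection $Y_r \cap \phi(M^{2n+d})$ is precisely the momentum-constraint locus $\{\rho + J_V(v) \in \mathfrak{l}\}$, i.e.\ that locally $M = \hat F^{-1}(L)$ with no spurious components. This requires unwinding the explicit symplectization form $\tilde\omega = \sum dh_i\wedge\alpha_i + \sum h_i\, d\alpha_i + proj^*\omega$ to see that $H_i - b_i$ furnishes transverse coordinates on the $D^d$ factor, so that $\{H_i = b_i\}$ recovers $M \times \{O\}$ and nothing more.
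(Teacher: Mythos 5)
Your proposal is correct and follows exactly the route the paper intends (the paper gives no detailed argument beyond the remark that the symplectization theorem combined with the Marle--Guillemin--Sternberg normal form yields the result): symplectize via Theorem \ref{thm:LocalSymplectization}, apply Theorem \ref{thm:SymplecticNF} to the tube $Y_r$, and identify $Z_r$ as the locus $F_{Y_r}^{-1}(L)$, i.e.\ $\rho+J_V(v)\in\mathfrak{l}$. Your additional verifications --- that $\{H_i=b_i\}$ locally recovers $M$ with no spurious components, that $B_r$ is $\mathbb{T}^{q+d}_m$-invariant, and that $\mathfrak{l}+\mathfrak{m}^*=\mathfrak{t}^*$ follows from the linear independence of $Y_1(m),\dots,Y_d(m)$ --- are exactly the details the paper leaves implicit, and they are all sound.
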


\begin{remark}
{\rm
The space $l$ in the above theorem is nothing else but the 
vector subspace which is parallel to the $q$-dimensional 
affine subspace which contains the image $F(M)$
of the momentum map by the flatness condition. The equality
$\mathfrak{l} + \mathfrak{m}^\ast = \mathfrak{t}^\ast$ in the
theorem assures that the set $B_r$ in the theorem is a manifold,
and hence the normal form model regular. The intersection 
$\mathfrak{l} \cap \mathfrak{m}^\ast$ is not trivial in general: in fact,
it corresponds to the face of $F(M)$ which contains the point $F(m)$. 
}
\end{remark}

\subsection{Proof of convexity Theorem \ref{thm:convexity}} 
\hfill

Equipped with symplectization (Theorem \ref{thm:LocalSymplectization})
and presymplectic normal forms (Theorem \ref{thm:PresymplecticNF})
we can now obtain Theorem \ref{thm:convexity} by simply repeating the
same steps in the proofs of the classical Atiayh--Guillemin--Sternberg 
theorem.

For example, we can use the 
approach based on the local-global convexity principle, as outlined
in \cite{Zung-Proper2006}: 

Denote by $\mathcal{B}$ the $q$-dimensional 
\textbf{\textit{base space}} of a
presymplectic toric manifold $M$ with the momentum $F$, which
satisfies the conditions of Theorem \ref{thm:convexity}. By definition,
$\mathcal{B}$ is the set of all connected components of all fibers
$F^{-1}(c) \subset M$, $c \in F(M)$, endowed with the quotient 
topology, i.e., a subset $U \subset \mathcal{B}$ is open if and 
only if $\pi^{-1}\left(\tilde{U}\right)$ is open in $M$, where $\pi:
M \rightarrow \mathcal{B}$ is the map that sends $y \in M$ to the
connected component of $F^{-1}(F(y))$ containing $y$. 

The momentum map $F: M \to \mathbb{R}^{q+d}$ factorizes through 
$\mathcal{B}$, i.e., $F = \tilde{F} \circ \pi$, where $\pi$ is the
projection map from $M$ to $\mathcal{B}$ and $\tilde{F}$ is a map from
$\mathcal{B}$ to $\mathbb{R}^{q+d}$. It follows from the normal forms
(Theorem \ref{thm:PresymplecticNF}) that $\mathcal{B}$ admits a natural
intrinsic integral affine structure (see \cite{Zung-Proper2006} for a
definition of this integral affine structure using period integrals over
1-cycles), such that $\mathcal{B}$ is locally convex with locally 
polyhedral boundary with respect to this affine structure, and the 
projected momentum
map $\tilde{F}: \mathcal{B} \to \mathbb{R}^{q+d}$ is a locally injective
integral affine map. Moreover, $\mathbb{B}$ is connected compact. 
The local-global convexity principle  
(see Lemma 3.7 of \cite{Zung-Proper2006}) 
then says that $\tilde{F}$ is an injective affine
map on $\mathcal{B}$, and its image $\tilde{F}(\mathcal{B}) = F(M)$
is a convex polytope of dimension $q$ in $\mathbb{R}^{q+d}$.

Theorem \ref{thm:convexity} is proved. \hfill $\square$

\section{Presymplectic toric manifolds} 
\label{section:Toric}

%Question: A flat slice of a simple polytope is again a simple polytope? Yes no no?
%Yes if the slice is transversal. No in general  if it's not transversal.

\subsection{Framed momentum polytopes of presymplectic toric manifolds} 
\hfill

\begin{definition}
A compact \textit{\textbf{presymplectic toric manifold}} is a compact 
manifold $M^{2n+k}$ of dimension $2n+k$
($n,k \geq 0$) equipped with a presymplectic structure $\omega$ with 
constant corank $k$ and a Hamiltonian torus $\mathbb{T}^{n+k}$-action 
$\rho: \mathbb{T}^{n+k} \times M^{2n+k} \to M^{2n+k}$ which is free 
almost everywhere and which satisfies the flatness condition given 
in Definition \ref{def:flatness}.
\end{definition}

\begin{remark}
{\rm
Karshon and Tolman in \cite{KaTo-Presymplectic1993} also introduced
a notion of presymplectic toric manifolds, but their notion is 
completely different and should not be confused with ours. In fact, 
the presymplectic 
structure in their manifolds are symplectic almost everywhere and is 
degenerate only at a small subset, while our presymplectic structure 
is a regular presymplectic structure of constant corank. Their 
manifolds do not have convexity properties for the momentum maps, 
while our presymplectic toric manifolds
do have convex momentum polytopes. 
}
\end{remark}

It follows from Theorem \ref{thm:convexity} and Theorem 
\ref{thm:LocalSymplectization} that if $(M^{2n+k},\omega, \rho)$ is a 
compact presymplectic toric manifold with a corresponding
momentum map $F =(F_1,\hdots, F_{n+k})$, then its image $P = F(M^{2n+k})$
is an $n$-dimensional convex polytope lying in $\mathbb{R}^{n+k}$. 
Moreover, $(M^{2n+k},\omega, \rho)$ admits a unique, up to isomorphism, 
symplectization, which is an open symplectic manifold $(M^{2n+2k},\omega, 
\rho)$ together with a Hamiltonian torus $\mathbb{T}^{n+k}$-ation, and 
which contains $(M^{2n+k},\omega, \rho)$ in such a way that the 
inclusion map $i: (M^{2n+k},\omega, \rho) \hookrightarrow 
(\hat{M}^{2n+2k},\omega, \rho)$ is compatible with the (pre)symplectic 
forms, the torus actions, and the 
momentum maps, which are denoted by the same letters
for both  $M^{2n+k}$ and $\hat{M}^{2n+2k}$. 

In particular, the image $F(M^{2n+2k})$ of 
$(\hat{M}^{2n+2k},\omega, \rho)$ under the momentum map $F$ is a 
$(n+k)$-dimensional locally-polyhedral set $Q\subset \mathbb{R}^{n+k}$, 
and $P = L \cap Q$ where $L$ is an $n$-dimensional affine subspace of 
$\mathbb{R}^{n+k}$; $Q$ is like an open subset of a Delzant polytope, 
i.e., its faces satisfy the rationality, simplicity, and 
regularity conditions of a Delzant polytope. We are only interested in 
the germ of a neighborhood of $P$ in $Q$. That germ is called the 
\textit{framed momentum polytope}. To make things more precise, we 
introduce the following definitions.

\begin{definition}
{\rm (i)} A $n$-dimensional convex polytope $P \subset  \mathbb{R}^{N}$ 
($N \geq n$) is called \textbf{\textit{rational-faced}}
if for every facet $Z$ of $P$ there is a linear function with integral 
coefficients $H_Z = \sum c_j f_j$, where $c_j \in \mathbb{Z}$ and 
$(f_1,\hdots, f_{N})$ is an integral affine coordinate system on 
$\mathbb{R}^{N}$, such that $H_Z$ is constant on $Z$ but is not constant 
on $P$. 

{\rm (ii)} An injective affine map $\eta: \mathbb{R}^{N_1} \to 
\mathbb{R}^{N_2}$ ($N_1 \leq N_2)$
is called an \textit{\textbf{integral affine embedding}} from 
$\mathbb{R}^{N_1}$ to $\mathbb{R}^{N_2}$ if, up to a translation, the 
image 
$\eta(\mathbb{Z}^{N_1})$ of the integral lattice of $\mathbb{R}^{N_1}$
is a sub-lattice of the integral lattice $\mathbb{Z}^{N_2}$ of 
$\mathbb{R}^{N_1}$ such that the quotient 
$\mathbb{Z}^{N_2}/\eta(\mathbb{Z}^{N_1})$ is without torsion, or 
equivalently, the pull-back of the space  of integral affine functions on 
$\mathbb{R}^{N_2}$ to $\mathbb{R}^{N_1}$ via the map $\eta$ is exactly 
equal to the space of integral affine functions on $\mathbb{R}^{N_1}$:
$\eta^*(Af\!f_\mathbb{Z}\mathbb{R}^{N_2}) = 
Af\!f_\mathbb{Z}\mathbb{R}^{N_1}$. 
If $P \subset \mathbb{R}^{N_1}$ is a polytope and $\eta: \mathbb{R}^{N_1} 
\to \mathbb{R}^{N_2}$ is an integral affine embedding then the 
restriction of $\eta$ to $P$ is also called an \textit{\textbf{integral 
affine embedding}} from $P$ to $\mathbb{R}^{N_2}$.

{\rm (iii)} Two convex polytopes $P_1 \subset \mathbb{R}^{N_1}$ and 
$P_2 \subset \mathbb{R}^{N_2}$ ($N_1 \leq N_2$) are called 
(integral-affinely) \textbf{\textit{isomorphic}} if there is an integral 
affine embedding $\eta: \mathbb{R}^{N_1} \to \mathbb{R}^{N_2}$ such that
the restriction of $\eta$ to $P$ is a homeomorphism from $P_1$ to 
$P_2 = \eta(P_1)$. In other words, there is integral affine embedding 
from $P_1$ to $\mathbb{R}^{N_2}$ whose image is $P_2$.
\end{definition}

\begin{remark}
{\rm
It is easy to see that the above notion of integral-affinely isomorphic 
polytopes is really an equivalence relation. If two polytopes 
$P, P' \subset \mathbb{R}^N$ in the same Euclidean
space are isomorphic then it means that there is an integral affine 
transform $\phi \in GL(N,\mathbb{Z}) \ltimes \mathbb{R}^N$ such that 
$\phi(P) = P'$.
}
\end{remark}

\begin{definition} 
{\rm (i)} A \textbf{\textit{regular rational-faced framing}}  
of a convex simple polytope $P$ of dimension $n$ in $\mathbb{R}^{N}$ 
($N \geq n$) is a pair $(L,Q)$, where $L$ is the 
$n$-dimensional affine subspace of $\mathbb{R}^{N}$ which contains $P$, 
and $Q$ is a locally-polyhedral set in $\mathbb{R}^{N}$ whose faces 
satisfy the rationality, simplicity, and regularity conditions of a 
Delzant polytope, such that $L$ intersects $Q$ transversally and 
$L \cap Q = P$. The convex polytope $P$ together with a regular 
rational-faced framing given by $Q$ is called a regular 
rational-faced \textbf{\textit{framed polytope}}.

{\rm (ii)} If $P = F(M^{2n+k}) \subset L \subset \mathbb{R}^{n+k}$ 
is the image under the flat momentum map $F$
of a presymplectic toric manifold $(M^{2n+k},\omega, \rho)$ and 
$Q = F(\hat{M}^{2n+2k})$ is the image under the momentum map $F$
of a symplectization $(\hat{M}^{2n+2k},\omega, \rho)$ of 
$(M^{2n+k},\omega, \rho)$, then $P$ is called the 
\textit{\textbf{momentum polytope}} of 
$(M^{2n+k},\omega, \rho)$, and $P$ together with the framing given by 
$Q$ is called the \textit{\textbf{framed momentum polytope}} of 
$(M^{2n+k},\omega, \rho)$.

{\rm (iii)} Two $n$-dimensional regular rational-faced framed 
polytopes $P_1 = L_1 \cap Q_1$ and $P_2 = L_2 \cap Q_2$ in 
$\mathbb{R}^{n+k}$ are called (integral-affinely) 
\textbf{\textit{isomorphic}} if there is an integral affine transform
$\Phi \in GL(n+k,\mathbb{Z}) \ltimes \mathbb{R}^{n+k}$ such that 
$\Phi(L_1) = L_2$ and $\Phi(U(P_1)) = U(P_2)$ where $U(P_1)$ (resp., 
$U(P_2)$) is a small neighborhood of $P_1$ (resp., $P_2$) in $Q_1$ 
(resp., $Q_2$).
\end{definition}

\begin{theorem}
{\rm (i)} If $(M^{2n+k},\omega, \rho)$ is a connected 
compact presymplectic toric manifold, 
then its momentum polytope is a convex rational-faced simple 
polytope and its framed momentum polytope is a regular rational-faced 
framed polytope.

{\rm (ii)} Conversely, any convex regular rational-faced framed 
polytope is the  framed momentum polytope of a compact presymplectic 
toric manifold.

{\rm (iii)} Connected compact presymplectic toric manifolds are 
classified by their framed momentum polytopes: 
two compact presymplectic toric manifolds are 
isomorphic if and only if their corresponding framed momentum polytopes 
are isomorphic.
\end{theorem}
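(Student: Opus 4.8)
The plan is to reduce the entire statement to the classical Delzant correspondence by means of the symplectization (Theorem \ref{thm:LocalSymplectization}). The organizing dictionary is that a compact presymplectic toric manifold $M^{2n+k}$ sits inside its symplectization $\hat{M}^{2n+2k}$ as the flat presymplectic slice $F^{-1}(L)$, where $L$ is the $n$-dimensional affine subspace carrying $P=F(M)$, while the framing $Q=F(\hat{M})$ is the germ of a Delzant polytope with $P=L\cap Q$. Thus every assertion about $M$ is translated into an assertion about the pair $(L,Q)$, which is governed by ordinary Delzant theory applied to the symplectic toric manifold $\hat{M}$, whose torus has half its dimension.

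For part (i), I would first invoke Theorem \ref{thm:convexity} (with $q=n$, $d=k$) to get that $P=F(M)$ is a convex $n$-dimensional polytope contained in an $n$-dimensional affine subspace $L\subset\mathbb{R}^{n+k}$, and then Theorem \ref{thm:LocalSymplectization} to produce the symplectization $(\hat{M}^{2n+2k},\hat{\omega})$ with its effective Hamiltonian $\mathbb{T}^{n+k}$-action, momentum image $Q=F(\hat{M})$, and $P=L\cap Q$. Since $\hat{M}$ is symplectic of dimension $2(n+k)$ with an effective Hamiltonian torus action of half its dimension, the Marle--Guillemin--Sternberg normal form (Theorem \ref{thm:SymplecticNF}) shows that the local momentum picture at each orbit is a standard Delzant corner, so $Q$ is rational, simple, and regular as required of a framing. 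Because $L$ meets $Q$ transversally, the intersection $P=L\cap Q$ inherits simplicity, and each facet of $P$ is cut from a rational facet of $Q$ by the affine slice, so $P$ is rational-faced; this is exactly the statement that $(L,Q)$ is a regular rational-faced framing and that $P$ is a convex rational-faced simple polytope.

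For part (ii), starting from a regular rational-faced framed polytope $(P,L,Q)$ in $\mathbb{R}^{n+k}$, I would reverse the construction. Since the faces of $Q$ satisfy the rationality, simplicity, and regularity conditions, a neighborhood $U(P)$ of $P$ in $Q$ is the germ of a Delzant polytope; completing it to a compact Delzant polytope $\tilde{Q}$ that agrees with $Q$ near $P$ and applying the Delzant construction yields a compact symplectic toric manifold $(\hat{M}^{2n+2k},\hat{\omega})$ with momentum image $\tilde{Q}$. Taking the flat presymplectic slice $M=F^{-1}(L)$, as in the Flat Slice Example, produces a compact manifold of dimension $2n+k$ with the inherited Hamiltonian $\mathbb{T}^{n+k}$-action, a presymplectic form of constant corank $k$, a flat momentum map, and $F(M)=L\cap\tilde{Q}=P$. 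By part (i) its framed momentum polytope is $(L,\tilde{Q})$, whose germ along $P$ is $(L,Q)$, so the given framed polytope is realized.

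For part (iii), the forward implication uses that an equivariant presymplectic diffeomorphism $\Psi:M_1\to M_2$ intertwines the two momentum maps up to an automorphism of $\mathbb{T}^{n+k}$ together with a constant, i.e. up to an element of $GL(n+k,\mathbb{Z})\ltimes\mathbb{R}^{n+k}$; by the uniqueness clause of Theorem \ref{thm:LocalSymplectization} this transform carries $L_1$ to $L_2$ and the germ of $Q_1$ at $P_1$ to that of $Q_2$ at $P_2$, so the framed polytopes are isomorphic. Conversely, given $\Phi\in GL(n+k,\mathbb{Z})\ltimes\mathbb{R}^{n+k}$ with $\Phi(L_1)=L_2$ and $\Phi(U(P_1))=U(P_2)$, Delzant uniqueness applied to the framings lifts $\Phi$ to an equivariant symplectomorphism of the symplectizations near the slices, and since $\Phi(P_1)=P_2$ it restricts to an equivariant presymplectic diffeomorphism $F_1^{-1}(P_1)\to F_2^{-1}(P_2)$, giving $M_1\cong M_2$. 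The main obstacle is to make the germ-level bookkeeping rigorous: one must check that $M=F^{-1}(P)$ depends only on the germ of $Q$ along $P$ and not on the chosen completion $\tilde{Q}$, and that the equivariant lift in the reverse direction is carried out on a neighborhood of the slice rather than on all of a compact $\hat{M}$. Both are controlled by the uniqueness in Theorem \ref{thm:LocalSymplectization}, together with the fact, established in the proof of Theorem \ref{thm:convexity}, that the base space $\mathcal{B}$ carries an intrinsic integral affine structure; this forces any equivariant diffeomorphism to descend to an integral affine map and thereby pins the identification down to $GL(n+k,\mathbb{Z})\ltimes\mathbb{R}^{n+k}$.
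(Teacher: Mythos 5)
Your part (i) and the forward direction of (iii) match the paper's argument: convexity from Theorem \ref{thm:convexity}, the symplectization from Theorem \ref{thm:LocalSymplectization}, rationality/simplicity/regularity of $Q$ from the normal form at the (non-degenerate elliptic) singular orbits, and simplicity of $P$ from transversality of the slice. The problem is in part (ii) (and it re-enters in the converse of (iii)). You propose to complete a neighborhood $U(P)$ of $P$ in $Q$ to a \emph{compact} Delzant polytope $\tilde{Q}$ agreeing with $Q$ near $P$, and then apply the classical Delzant construction \cite{Delzant1988}. The existence of such a completion is not established and is genuinely non-trivial: the facet hyperplanes of $Q$ touching $P$ generate a rational polyhedron that may be unbounded and, away from $P$, may acquire vertices violating simplicity or regularity; truncating by an auxiliary box can create further non-Delzant corners. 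This is essentially the polytope-level content of the \emph{global symplectization conjecture} stated at the end of the paper, which the authors explicitly leave open (noting only that the orbifold version, i.e. dropping regularity of the completion, is easy). So as written, your proof of (ii) rests on an unproved — and apparently open — extension step.

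The paper avoids this entirely: instead of completing $Q$ and invoking Delzant, it applies Zung's construction and classification of integrable Hamiltonian systems with non-degenerate elliptic singularities \cite{Zung-Integrable2003} directly to the (non-compact, germ-like) base $Q$ with its induced integral affine structure, producing an \emph{open} symplectic manifold $\hat{M}^{2n+2k}$ with $F(\hat{M}^{2n+2k})=Q$, and then takes $M=F^{-1}(P)$, which is compact because $P$ is. The same classification result (uniqueness of the integrable system over a given base), combined with the uniqueness clause of Theorem \ref{thm:LocalSymplectization}, is what powers the converse of (iii); your appeal to ``Delzant uniqueness'' near the slices should likewise be replaced by a uniqueness statement valid for these non-compact, germ-level toric systems rather than for compact Delzant polytopes. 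If you want to keep your route, you must either prove the compact Delzant completion exists (which would in effect settle a case of the authors' conjecture) or substitute a semi-local existence/uniqueness theorem for toric systems over a base germ.
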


\begin{proof} 

Part {\rm (i)} is just a special case of the results obtained in Section
\ref{section:Convexity}. The image $Q = F(M^{2n+k})$ of a
symplectization $(\hat{M}^{2n+2k},\omega, \rho)$ of our presymplectic 
manifold is locally-polyhedral and satisfies the rationality, simplicity 
and regularity properties at its face because the singularities of the 
Hamiltonian torus action, viewed as a toric integrable Hamiltonian system 
on it, are all non-degenerate elliptic. The momentum polytope $P$ of 
$(M^{2n+k},\omega, \rho)$ is a convex polytope because of the convexity 
Theorem \ref{thm:convexity}, and this polytope is simple because it is a 
slice of $Q$, which is simple, by a affine submanifold which cuts $Q$ 
transversally (the transversality condition is implied by the regularity
condition of the presymplectic form $\omega$ on $M^{2n+k}$). 

For Part {\rm (ii)}, given a regular framing $(P,Q)$ of $P$, there is a 
unique integrable Hamiltonian system with elliptic singularities which 
admits $Q$ (together with its induced integral affine structure) as the 
base space, according to the general results of Zung 
\cite{Zung-Integrable2003} on the construction and classification of 
integrable Hamiltonian systems. Due to the type of the base space and 
singularities, this integrable Hamiltonian system is actually a 
Hamiltonian torus action of half the dimension of the symplectic manifold 
$(M^{2n+2k},\omega)$. By taking $M^{2n+ k} = F^{-1}(P)$ with the 
pull-back of the symplectic form and the restriction of the torus 
action on it, where $F: M^{2n+2k} \to \mathbb{R}^{n+k}$ is the momentum 
map of the Hamiltonian torus $\mathbb{T}^{n+k}$-action such that 
$F(M^{2n+2k}) = Q$, we get the required presymplectic toric manifold whose
framed momentum polytope is $(P,Q)$. Part {\rm (iii)} also follows from 
these same arguments, together with Theorem \ref{thm:LocalSymplectization}
about the existence and uniqueness of equivariant symplectization.
\end{proof}

\begin{remark}[Slices of Delzant polytopes]
{\rm  When $Q$ is a framing of $P$ then we also say that $P$ is a 
\textbf{\textit{slice}} of $Q$. If $Q$ is a Delzant polytope and 
$P = L \cap Q$ is a slice of $Q$ by an affine subspace which intersects 
$Q$ transversally, then, of course, by Delzant's theorem $Q = F(M)$ is 
the image of the momentum map $F$ of a toric manifold $M$ and 
$M_P = F^{-1}(P)$ is the presymplectic toric submanifold of $M$ whose 
momentum polytope is $P$. In general, we do not need $Q$ to be a 
regular simple polytope, we just need that locally $Q$ looks like a 
regular simple polytope at its faces. 
}
\end{remark}

\subsection{Lifting and framing of polytopes} \hfill
\label{subsection:LiftingFraming}

In the definition of a rational-faced polytope $P$,  the
affine subspace $L \subset \mathbb{R}^N$ containing $P$ may be 
irrational, in the sense that the linear equations defining it may have 
irrational linear coefficients, even though each facet of $P$
must satisfy a rational linear equation. If $L$ is rational, then we 
also say that $P$ is \textbf{\textit{rational}}, and if $L$ is 
irrational then we also say that $P$ is \textbf{\textit{irrational}}.

More precisely, we can define the \textbf{\textit{degree of 
irrationality}} to be the minimal number of linear equations which 
must have at least one irrational linear coefficient in the definition 
of the supporting affine subspace $L$ of $P$. It is clear that $P$ 
is rational if and only if its degree of irrationality is 0, and if 
$P$ and $P'$ are isomorphic  then they have the same degree of 
irrationality.

For each convex rational-faced polytope $P \subset \mathbb{R}^N$, 
denote by $Af\!f_\mathbb{Z}(P)$ the Abelian group of all integral 
affine functions restricted to $P$, i.e., the quotient of the space 
of all integral affine functions on  $\mathbb{R}^N$ by those which 
vanish on $P$. The rational-faced condition means that each facet 
of $P$ is given by an equation of the form $F = 0$, where 
$F \in Af\!f_\mathbb{Z}(P)$. The quotient $Da\!f\!f_\mathbb{Z}(P) = 
Af\!f_\mathbb{Z}(P)/\mathbb{R}$ of $Af\!f_\mathbb{Z}(P)$ by constant 
functions is a free finitely generated Abelian group, and is called 
the \textbf{\textit{Abelian group of integral affine 1-forms}} on $P$.

Up to isomorphisms, each convex rational-faced polytope $P$ is uniquely
characterized by its group of integral affine functions 
$Af\!f_\mathbb{Z}(P)$, i.e., $P_1$ is isomorphic to $P_2$ (even if they 
live in different Euclidean spaces) if and only if there is a 
homeomorphism from $P_1$ to $P_2$ which induces a group isomorphism from
$Af\!f_\mathbb{Z}(P_1)$ to $Af\!f_\mathbb{Z}(P_2)$. The number 
$$ 
d = {\rm rank}_\mathbb{Z} Da\!f\!f_\mathbb{Z}(P) - \dim P
$$ 
is nothing else but the \textit{degree of irrationality} of $P$. In
addition, $P$ is a subset of a Euclidean space of dimension at least 
$n+d$, where $n$ is the dimension of $P$ and $d$ is
the degree of irrationality of $P$. 

One can embed $P$ isomorphically into $\mathbb{R}^{n+d}$ by a map 
$G = (G_1,\hdots,G_{n+d}): P \to \mathbb{R}^{n+d}$, where 
$(G_1,\hdots,G_{n+d})$ modulo constants is a basis of 
$Da\!f\!f_\mathbb{Z}(P)$. However, in order to find a 
\textit{regular} simple 
rational-faced framing for $P$, we may need more than $n+d$ dimensions. 
Indeed, already if $P$ is $n$-dimensional simple rational but not 
regular in $\mathbb{R}^n$, we need more dimensions in order to produce 
a regular framing of $P$. 

By increasing the dimension of the Euclidean space, if necessary, it is 
always possible to find a regular rational-faced framing for a
rational-faced simple convex polytope, as will be shown in Theorem 
\ref{thm:LiftingFraming}.

By an \textit{irrational polytope}, many authors, including 
Prato--Battaglia \cite{BaPr_SimpleNonrational2001} and 
Katzarkov--Lupercio--Meersseman--Verjovsky
\cite{KLMV-NoncommutativeToric2014}, mean an $n$-dimensional polytope 
in $\mathbb{R}^n$ with an irrational face. However, the rational-faced 
property of a polytope is preserved under isomorphisms, 
so if a polytope $P$ has an irrational face, there is no way to turn 
it into the momentum polytope of a presymplectic toric manifold by 
integral affine isomorphisms.

Nevertheless, one can always \textit{lift} an arbitrary convex polytope
$P \subset \mathbb{R}^n$ to a rational-faced polytope $P' \subset 
\mathbb{R}^N$ with a regular rational-faced framing $Q \subset 
\mathbb{R}^N$ for some $N > n$, such that the linear  projection map 
$proj:\mathbb{R}^N \to \mathbb{R}^n$ on the first $n$ components of 
$\mathbb{R}^N$ projects $P'$ to $P$ homeomorphically. If $P$ is 
rational-faced then projection map $proj$ is an integral-affine 
isomorphism from $P'$ to $P$, and if $P$ is not rational-faced, then it 
is not. We call such a $P'$ a \textbf{\textit{rational-faced lifting}} 
of $P$.

We describe below an easy construction of rational-faced liftings 
together with a regular framing, which is already used by Prato in 
\cite{Prato_Nonrational2001}.

Let $P \subset \mathbb{R}^n$ be a arbitrary convex polytope of 
dimension $n$. Let $F_i(x) = \sum_{j=1}^n a_{ij}x_j + b_i$, 
where $a_{ij}, b_i \in \mathbb{R}$ and $(x_1,\hdots,x_n)$ is an 
integral affine coordinate system of $\mathbb{R}^n$, be linear 
functions on $\mathbb{R}^n$ which determine the facets of $P$ 
($i = 1,\hdots,k$), and such that $F_i \geq 0$ on $P$. Then $P$ 
can be written as 
$$
P=\{x \in \mathbb{R}^n \mid F_i(x) \geq 0,\, i=1,\ldots, k\}.
$$ 
Denote the coordinates 
of points in $\mathbb{R}^{n+k}$ by $(x_1,\hdots, x_n, y_1,\hdots, y_k)$.
For $M>0$ sufficiently large, define the box
\begin{align*}
Q = \{(x_1,\hdots, x_n, y_1,\hdots, y_k) \in\mathbb{R}^{n+k} \mid&
-M \leq x_i \leq M,\,i=1, \ldots, n,\\
&\;\;0 \leq y_j \leq M, \, j= 1, \ldots, k\}.
\end{align*}
Cut $Q$ by the $k$ hyperplanes $L_i$, $i = 1,\hdots, k$, where 
$$
L_i = \left\{(x_1,\hdots, x_n, y_1,\hdots, y_k) \in\mathbb{R}^{n+k}\,
\left|\,
y_i = \sum_{j=1}^n a_{ij}x_j + b_i \right. \right\}
$$
to obtain $P' = Q \cap L_1 \cap \hdots \cap L_k$.  
It is easy to see that if $M$ is large enough, then
$P' \subset \mathbb{R}^{n+k}$ projects bijectively 
to $P \subset \mathbb{R}^n$, and that $Q$ is a Delzant polytope, 
hence a regular framing for $P'$.

When $P$ is rational-faced, we can choose all the above coefficients 
$a_{ij}$ to be integers. In this case, because the coefficient of 
$y_i$ in the equation $y_i = \sum_{j=1}^n a_{ij}x_j + b_i$
is 1 for every $1\leq i \leq k$, it is easy to verify that the 
projection from $P'$ to $P$ is an integral affine isomorphism. 

If the dimension of $P \subset \mathbb{R}^n$ is smaller than  
$n$, the situation is the same: just add the same defining linear 
equations for $P$ to the above linear equations $y_i = 
\sum_{j=1}^n a_{ij}x_j + b_i$ for $P'$; $Q$ remains the same. From 
this construction, we obtain the following result.

\begin{theorem} \label{thm:LiftingFraming}
For any convex simple polytope $P\subset \mathbb{R}^N$ there is 
another polytope $P'\subset \mathbb{R}^{N'}$ ($N \geq N'$) which 
projects to $P$ under the natural projection from $\mathbb{R}^{N'}$ 
to $\mathbb{R}^{N}$, such that $P'$ admits a regular rational-faced 
framing in $\mathbb{R}^{N'}$, and hence is the momentum polytope of 
a presymplectic toric manifold. Moreover, if $P$ is rational-faced 
the $P'$ is integral-affinely isomorphic to $P$.
\end{theorem}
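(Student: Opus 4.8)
The plan is to turn the explicit construction given just above the statement into a rigorous proof and then invoke part (ii) of the classification theorem for presymplectic toric manifolds proved above. Write $n$ for the dimension of the ambient space of $P$ and $k$ for the number of its facets, and realize $P = \{x \in \mathbb{R}^n : F_i(x) \geq 0,\ i = 1, \ldots, k\}$ with each $F_i(x) = \sum_j a_{ij} x_j + b_i$ vanishing on the $i$-th facet and positive on $\mathrm{int}\,P$; if $\dim P < n$ I would additionally retain the affine equations cutting out the affine hull of $P$. Introducing the coordinates $(x_1, \ldots, x_n, y_1, \ldots, y_k)$ on $\mathbb{R}^{n+k}$, I set $Q = [-M, M]^n \times [0, M]^k$ with $M$ a large integer, $L = \bigcap_{i=1}^k \{y_i = F_i(x)\}$, and $P' = L \cap Q$, and I will show that $(L, Q)$ is a regular rational-faced framing of $P'$ that projects onto $P$.

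The argument breaks into four checks. First, $Q$ is a Delzant polytope: its facets are coordinate hyperplanes with integer constant terms, hence rational, and at each vertex the incident edges point along the standard basis vectors, which form a $\mathbb{Z}$-basis of the lattice, so $Q$ is simple and regular. Second, $L$ is the graph of $x \mapsto (F_1(x), \ldots, F_k(x))$, so $proj : (x, y) \mapsto x$ restricts to an affine isomorphism of $L$ onto $\mathbb{R}^n$; since $P$ is compact, for $M$ large the constraints $x_i \in [-M, M]$ and $y_i = F_i(x) \leq M$ are non-binding on $P$, whence $proj(P') = \{x : F_i(x) \geq 0\} = P$ and $proj|_{P'}$ is a bijection. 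Granting the third check (transversality) below, $(L, Q)$ is then a regular rational-faced framing, and by the same reasoning used in the classification theorem the slice $P' = L \cap Q$ is a simple polytope whose facets are cut out by the rational facets $\{y_i = 0\}$ of $Q$, hence $P'$ is rational-faced.

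The third and central check is that $L$ meets $Q$ transversally along $P'$, and this is the step I expect to be the main obstacle, since it requires stratified transversality rather than mere transversality to the top facets and is where the simplicity of $P$ enters. For $M$ large the only faces of $Q$ adjacent to $P'$ are of the form $\{y_{i_1} = \cdots = y_{i_s} = 0\}$, and transversality of $L$ to such a face is equivalent to surjectivity of the map $TL \to \mathbb{R}^s$ recording the $y_{i_1}, \ldots, y_{i_s}$ components. Writing the basis $v_l = e_l^x + \sum_i a_{il} e_i^y$ of $TL$, this map sends $v_l \mapsto (a_{i_1 l}, \ldots, a_{i_s l})$, so surjectivity is exactly linear independence of the facet gradients $\nabla F_{i_1}, \ldots, \nabla F_{i_s}$. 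Whenever the corresponding facets of $P$ share a common face, simplicity of $P$ forces their normals to be linearly independent, which yields the required surjectivity; faces of $Q$ whose indexed facets do not meet in $P$ are disjoint from $P'$ and so are irrelevant for the germ. This establishes transversality at every relevant face and completes the third check.

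With $(L, Q)$ a regular rational-faced framing of the simple polytope $P'$, part (ii) of the classification theorem realizes $P'$ as the momentum polytope of a compact presymplectic toric manifold, giving the first assertion. For the final sentence, suppose $P$ is rational-faced and choose all $a_{ij} \in \mathbb{Z}$. Then each graph equation $y_i = F_i(x)$ has integer coefficients and unit coefficient on $y_i$, so the embedding $x \mapsto (x, F(x))$ pulls the lattice of integral affine functions on $\mathbb{R}^{n+k}$ back precisely onto that of $\mathbb{R}^n$; thus it is an integral affine embedding and its inverse $proj|_{P'} : P' \to P$ is an integral affine isomorphism, as claimed.
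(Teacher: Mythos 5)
Your proposal is correct and follows essentially the same route as the paper: the paper's ``proof'' is exactly the box-and-graph construction ($Q=[-M,M]^n\times[0,M]^k$ cut by the hyperplanes $y_i=F_i(x)$) given in the subsection preceding the theorem, combined with the earlier realization theorem for regular rational-faced framed polytopes. The only difference is that you spell out the stratified transversality of $L$ with the faces of $Q$ via linear independence of facet normals at common faces of the simple polytope $P$ --- a verification the paper leaves as ``easy to see'' --- and that check is correct.
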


\begin{remark} {\rm
Even when $P$ is not simple, we can use the same construction.
The only difference is that $L$ does not intersect $Q$ transversally, 
if $P$ is not simple.
}
\end{remark}

\begin{example}
{\rm 
In general, the same convex rational-faced polytope  admits 
infinitely many regular framings which are not isomorphic, and thus 
correspond to infinitely many non-isomorphic presymplectic toric 
manifolds. Take, for example, the interval 
$P = [O,A] \subset \mathbb{R}^2$, where $O=(0,0)$ and $A = (a,0)$. 
Then $P$ is rational-faced and admits infinitely many non-isomorphic 
2-dimensional regular rational-faced framings. In fact, for any 
positive integer $p$ and any integer $q$ such that $gcd(p,q) =1$, 
the set 
$$
Q_{p,q} = \{(x,y) \in \mathbb{R}^2 \mid 
-\varepsilon < y < \varepsilon, \;
x \geq 0, \; p(x-a) + qy \leq 0 \}
$$
(for $\varepsilon > 0$ small enough) is a regular rational-faced 
framing of $P$, and two framings $Q_{p,q}$ and $Q_{p',q'}$ are 
isomorphic if and only if $p = p'$ and $q = \pm q'$.
}
\end{example}

\subsection{Morita equivalence} \hfill

The notion of Morita equivalence that we want to introduce in this
subsection is inspired by the notion of Morita equivalence for Lie 
groupoids (see, e.g., \cite{Mackenzie2005}, 
\cite[Section 7.2]{DuZu2005}).
Intuitively speaking, two presymplectic toric manifolds 
$(M_1,\omega_1, \rho_1)$ and $(M_2,\omega_2, \rho_2)$ are Morita
equivalent if their quotient spaces with respect
to the corresponding kernel isotropic foliations are isomorphic. 

In the case of rational momentum polytopes, the quotient spaces of 
the presymplectic toric manifolds are symplectic toric orbifolds and 
we can really compare them directly. However, when the momentum 
polytopes are irrational, the quotient spaces are quasifolds which
are not Hausdroff, and it is rather inconvenient to compare such 
bad quotient spaces directly. Instead, we will develop the Morita 
equivalence as an indirect way to verify when two presymplectic 
toric manifolds should be considered as having the same quotient space.

\begin{definition}
\label{defn:MoritaPresymplectic}
{\rm (i)} Let $\phi: (M_1^{2n+k_1},\omega_1, \rho_1) \to 
(M_2^{2n+k_2},\omega_2, \rho_2)$ be a submersion 
with connected fibers between two presymplectic toric manifolds $M_1$ 
and $M_2$, with $k_1 \geq k_2$ ($k_i$ is the corank of $\omega_i$).
Then $\phi$ is called a \textbf{\textit{Morita equivalence submersion}}  
if $\omega_1 = \phi^* \omega_2$, and 
$\phi(\rho_1(t,x)) = \rho_2(\theta(t),\phi(x))$ for any $x \in 
M_1^{2n+k_1}$ and any $t \in \mathbb{T}^{n+k_1}$,
where $\theta: \mathbb{T}^{n+k_1} \to \mathbb{T}^{n+k_2}$ is a 
surjective homomorphism whose kernel is connected.

{\rm (ii)} Two presymplectic toric manifolds 
$(M_1^{2n+k_1},\omega_1, \rho_1)$ and $(M_2^{2n+k_2},\omega_2, 
\rho_2)$ are called \textbf{\textit{Morita equivalent}} if there 
is a third presymplectic toric manifold $(M_3^{2n+k_3},\omega_3, 
\rho_3)$ together with two Morita equivalence submersions 
$\phi_1: M_3^{2n+k_3} \to M_1^{2n+k_1}$ and 
$\phi_2: M_3^{2n+k_3} \to M_2^{2n+k_2}$.
\end{definition}

It is not obvious from the definition that the above Morita 
equivalence notion is a true equivalence relation
among presymplectic toric manifolds, but it really is. In order to 
see why, we can translate this equivalence relation to an 
equivalence relation among framed momentum polytopes
of presymplectic toric manifolds.

\begin{definition}\label{defn:MoritaPolytopes}
{\rm (i)} Let $Q_1 \subset \mathbb{R}^{N_1}$ and $Q_2 \subset 
\mathbb{R}^{N_2}$ be two regular rational-faced 
framings of two rational-faced simple polytopes $P_1 \subset 
\mathbb{R}^{N_1}$ and $P_2 \subset \mathbb{R}^{N_2}$ respectively. 
Assume that there is an integral affine embedding 
$\eta: \mathbb{R}^{N_2} \to\mathbb{R}^{N_1} $ from 
$\mathbb{R}^{N_2}$ to $\mathbb{R}^{N_1}$ ($N_1 \geq N_2$), such 
that $\eta(P_2) = P_1$ and $\eta(U(P_2)) = U(P_1)$ where 
$U(P_i)$ is a small neighborhood of $P_i$ in $Q_i$ respectively
($i=1,2$). Then we say that the framed polytope $(P_1,Q_1)$ is 
\textbf{\textit{Morita-equivalent}} to the framed polytope 
$(P_2, Q_2)$, and that $\eta$ is a \textbf{\textit{Morita 
equivalence embedding}} from $(P_2, Q_2)$ to $(P_1,Q_1)$. 

{\rm (ii)} Two framed polytopes are called 
\textbf{\textit{Morita-equivalent}} if both of them admit 
Morita equivalence embeddings to a third framed polytope.
\end{definition}

\begin{theorem} \label{thm:MoritaPolytopes}
The Morita equivalence of regular rational-faced framed simple convex 
polytopes is a true equivalence relation.
\end{theorem}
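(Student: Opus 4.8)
The plan is to verify the three defining properties of an equivalence relation for Morita equivalence of regular rational-faced framed simple convex polytopes. Reflexivity and symmetry are immediate from Definition \ref{defn:MoritaPolytopes}(ii): reflexivity follows by taking the identity integral affine embedding $\eta = \mathrm{id}: \mathbb{R}^{N} \to \mathbb{R}^{N}$, which trivially sends $(P,Q)$ to itself and hence serves as a Morita equivalence embedding from $(P,Q)$ to itself; symmetry is built directly into part (ii), since the definition requires both framed polytopes to admit embeddings into a common third one, a condition manifestly symmetric in the two polytopes. Thus the entire content of the theorem is \emph{transitivity}, which is the step I expect to be the main obstacle.

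To establish transitivity, suppose $(P_1, Q_1)$ is Morita-equivalent to $(P_2, Q_2)$ via a common framed polytope $(P_3, Q_3)$ with embeddings $\eta_{13}: \mathbb{R}^{N_3} \to \mathbb{R}^{N_1}$ and $\eta_{23}: \mathbb{R}^{N_3} \to \mathbb{R}^{N_2}$, and suppose $(P_2, Q_2)$ is Morita-equivalent to $(P_4, Q_4)$ via a common framed polytope $(P_5, Q_5)$ with embeddings $\eta_{25}: \mathbb{R}^{N_5} \to \mathbb{R}^{N_2}$ and $\eta_{45}: \mathbb{R}^{N_5} \to \mathbb{R}^{N_4}$. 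The goal is to produce a single framed polytope to which both $(P_1, Q_1)$ and $(P_4, Q_4)$ admit Morita equivalence embeddings. The natural candidate is a \emph{fibered product} (pushout) framed polytope built from $(P_3, Q_3)$ and $(P_5, Q_5)$ over their common target $(P_2, Q_2)$: concretely, I would form the subspace $\mathbb{R}^{N_6} = \mathbb{R}^{N_3} \times_{\mathbb{R}^{N_2}} \mathbb{R}^{N_5} \subset \mathbb{R}^{N_3} \times \mathbb{R}^{N_5}$ consisting of pairs $(u,v)$ with $\eta_{23}(u) = \eta_{25}(v)$, equipped with the natural integral lattice inherited from $\mathbb{Z}^{N_3} \oplus \mathbb{Z}^{N_5}$. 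Since both $\eta_{23}$ and $\eta_{25}$ are integral affine embeddings identifying their sources isomorphically with subspaces of $\mathbb{R}^{N_2}$, this fibered product carries a natural framed polytope structure $(P_6, Q_6)$ whose intersection polytope is integral-affinely isomorphic to each of $P_2, P_3, P_5$, and it admits integral affine projections to both $\mathbb{R}^{N_3}$ and $\mathbb{R}^{N_5}$.

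The key steps, in order, are as follows. First I would check that the fibered product $\mathbb{R}^{N_6}$ inherits a lattice whose quotients are torsion-free, so that the two projections $p_3: \mathbb{R}^{N_6} \to \mathbb{R}^{N_3}$ and $p_5: \mathbb{R}^{N_6} \to \mathbb{R}^{N_5}$ (more precisely their inverse embeddings) are genuine integral affine embeddings in the sense of Definition 2.13(ii); this is the delicate lattice-theoretic point, since the fibered product of two torsion-free quotients need not automatically be torsion-free unless one uses the hypothesis that the kernels are connected / the lattices match on the overlap. Second, I would verify the framing compatibility: that $Q_6$ (the preimage of $Q_3$ and $Q_5$ under the respective projections) is again regular rational-faced, that it intersects the supporting affine subspace $L_6$ transversally, and that $L_6 \cap Q_6 = P_6$ projects isomorphically onto each $P_i$ near the polytope. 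Third, I would compose embeddings: $\eta_{13} \circ (p_3|_{\mathbb{R}^{N_6}})$ realizes a Morita equivalence embedding from $(P_6, Q_6)$ into $(P_1, Q_1)$, and $\eta_{45} \circ (p_5|_{\mathbb{R}^{N_6}})$ realizes one into $(P_4, Q_4)$, using that the composition of integral affine embeddings is an integral affine embedding. The hardest part will be the first step — controlling the integral lattice of the fibered product and ensuring the relevant quotients remain torsion-free — since this is precisely where the rationality of the faces and the integrality conditions in the definition of $\eta$ must be used carefully; the geometric transversality and regularity checks in the second step, while requiring attention, follow the same pattern as the slicing arguments already established in Section \ref{section:Convexity}.
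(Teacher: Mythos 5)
Your reduction of the theorem to transitivity is correct (reflexivity via the identity and the built-in symmetry of Definition~\ref{defn:MoritaPolytopes}(ii) are immediate, and you rightly note that one also needs closure of Morita equivalence embeddings under composition), but your transitivity argument rests on a reversed reading of the definition, and as a result the construction you propose points the wrong way. In Definition~\ref{defn:MoritaPolytopes}, a Morita equivalence embedding goes \emph{from} the lower-dimensional framed polytope \emph{to} the higher-dimensional one, and two framed polytopes are Morita equivalent when \emph{both admit embeddings to} a common third one: that third framed polytope is a common \emph{target}, containing both given framings as transversal slices, not a common source. You instead take $(P_3,Q_3)$ to be a common source, with $\eta_{13}:\mathbb{R}^{N_3}\to\mathbb{R}^{N_1}$ and $\eta_{23}:\mathbb{R}^{N_3}\to\mathbb{R}^{N_2}$. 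With the correct reading, the datum available for transitivity is a span: the middle framed polytope $(P_2,Q_2)$ embeds into both common targets, and one must complete this span to a commuting square, i.e., construct a \emph{pushout} (amalgamated sum) of the two targets over $(P_2,Q_2)$. Your fibered product $\{(u,v)\mid \eta_{23}(u)=\eta_{25}(v)\}$ is a \emph{pullback}; it completes a cospan, but no cospan is given --- a common target of the two outer framings is precisely what has to be produced. So the argument cannot be repaired by tightening the lattice details: it proves transitivity of the ``common sub-slice'' relation, which is not the relation defined in the paper.

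For contrast, the paper's key step is the opposite construction: given embeddings of $(P,Q)$ into $(P_1,Q_1)$ and $(P_2,Q_2)$, split the ambient spaces integrally as $V_i=K_i\oplus V$, form $V_3=K_1\oplus K_2\oplus V$ of dimension $N_1+N_2-N$, and frame $P$ in $V_3$ by taking, for each facet $\zeta$ of $Q$ lying in facets $\zeta_1\subset Q_1$ and $\zeta_2\subset Q_2$, the half-space bounded by the hyperplane of $V_3$ spanned by $\zeta_1\cup\zeta_2$. The substantive verification is that this amalgamated framing is again \emph{regular}, which is done by concatenating adapted integral lattice bases $(\alpha_1,\dots,\alpha_N)$, $(\beta_1,\dots,\beta_{N_1-N})$, $(\gamma_1,\dots,\gamma_{N_2-N})$ of $T_xQ$, $T_xQ_1$, $T_xQ_2$ at a point of each face. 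Your instinct that the delicate point is lattice-theoretic is sound, but the issue attaches to this pushout (where the explicit basis settles it), not to a pullback; moreover an intersection of two slices need not even be a transversal, regular framing of $P$, so the second step of your plan would also fail as stated.
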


\begin{proof} All framings in this proof are assumed to be 
regular simple.
It is easy to see directly from Definition \ref{defn:MoritaPolytopes} 
that if $\eta$ is a Morita equivalence embedding from a framed polytope
$(P_1,Q_1)$ to a framed polytope $(P_2,Q_2)$, and $\nu$
is a Morita equivalence embedding from
$(P_2,Q_2)$ to a framed polytope $(P_3,Q_3)$, then the composition
$\nu \circ \eta$ is a Morita equivalence embedding from
$(P_1,Q_1)$ to $(P_1,Q_1)$.

The main point in the proof of the above theorem is the verification 
of the following statement: if $(P,Q)$ admit two Morita equivalence 
embeddings to $(P_1,Q_1)$ and $(P_2,Q_2)$ then there exist Morita 
equivalence embeddings from $(P_1,Q_1)$ and $(P_2,Q_2)$ to another 
framed polytope $(P_3,Q_3)$.

We will construct $(P_3,Q_3)$ as a ``\textit{crossed product of 
$(P_1,Q_1)$ and $(P_2,Q_2)$ over $(P,Q)$}''. The construction goes 
as follows: for each $i = 1,2$,
decompose the ambient Euclidean space $V_i \cong \mathbb{R}^{N_i}$ 
of $Q_i$ in an integral affine way 
as $V_i = K_i \oplus V$ (the integral affine structure on $V_i$ is 
the direct sum of
the integral affine structures on $K_i$ and $V$), where 
$V \cong \mathbb{R}^N$
is identified with the ambient Euclidean space of $Q$ via the Morita 
equivalence embedding from $(P,Q)$ to $(P_i, Q_i)$. $P$ is 
identified with $P_1$ and $P_2$ via these
embeddings. Put $V_3 = K_1 \oplus K_2 \oplus V \cong 
\mathbb{R}^{N_1+N_2-N}$, which contains both $V_1$ and $V_2$ via 
natural identifications, and define the framing $Q_3$
of $P_3 = P$ in $V_3$ as follows:

Each facet $\zeta$ of $Q$ is contained in exactly one facet 
$\zeta_1$ of $Q_1$ and exactly one facet $\zeta_2$ of $Q_2$ in $V$, 
$\zeta_1 \cap \zeta_2 = \zeta.$ The (smallest) affine 
subspace of $V_3$ which contains both $\zeta_1$ and $\zeta_2$ is 
a hyperplane (i.e., of codimension 1) in $V_3$. Denote by 
$\tilde{\zeta}_3$ the half-space of $V_3$ bounded
by this hyperplane which contains $P$. Take $\tilde{Q}_3$ to be 
the intersection of all these half-spaces (one for each facet of $Q$), 
and define $Q_3$ to be a small neighborhood of $P$ in this 
intersection. This is the framing of $P$ that we wanted to construct. 

It is clear from the construction that $Q_3$ is simple, rational, 
and that both $(P,Q_1)$ and $(P,Q_2)$ are embedded in $(P,Q_3)$ 
in an integral affine way. It remains to check that $Q_3$ is regular, 
but this fact is a consequence of our assumption that 
the three frames $Q, Q_1, Q_2$ are all regular. 

Indeed, consider a face of $Q_3$ and prove that $Q_3$ is regular 
at that face. It's enough to show that $Q_3$ is regular at one point 
$x$ of that face, and we can choose $x$ to be in $Q$, because $Q$ is 
a transversal slice of $Q_3$ and each face of $Q_3$ contains a face 
of $Q$. 

Denote by $\zeta^1,\hdots, \zeta^m$ the facets of $Q$ which contain 
$x$ ($m \geq 1$). The regularity of $Q$ at $x$ means that on the 
tangent space $T_xQ$, equipped with the integral lattice induced
from $V \supset Q$, there is a basis $(\alpha_1,\hdots,\alpha_N)$ 
of this integral lattice such that
$\alpha_i \in T_x(\zeta^1 \cap \hdots \cap \zeta^{i-1} \cap \zeta^{i+1} 
\cap \hdots \cap \zeta^m)$ for each $i= 1,\hdots,m$ and 
$(\zeta_{m+1},\hdots,\zeta_N)$ is a basis for the integral lattice of
$T_x(\zeta^1 \cap \hdots \cap \zeta^m)$. 

Because $Q_1$ is also regular at $x$, we have a similar basis for 
the integral lattice of $T_xQ_1$.
Actually, because of the embedding of $Q$ in $Q_1$, we can choose 
the basis of the integral lattice of $T_xQ_1$ to be of the form 
$(\alpha_1,\hdots,\alpha_N,\beta_1,\hdots,\beta_{N_1-N})$, where
$(\alpha_1,\hdots,\alpha_N)$ is the above basis for $T_xQ$ and 
$\beta_1,\hdots,\beta_{N_1-N}$ are additional vectors in the tangent 
space to the intersection of the facets of $Q_1$ at $x$.
For the same reasons, we have a similar basis 
$(\alpha_1,\hdots,\alpha_N,\gamma_1,\hdots,\gamma_{N_2-N})$ for 
the integral affine lattice of $T_xQ_2$. Then 
$(\alpha_1,\hdots,\alpha_N,\beta_1,\hdots,\beta_{N_1-N}, \gamma_1,
\hdots, \gamma_{N_2-N})$ is a basis for the integral lattice of 
$T_xQ_3$, which  implies that $Q_3$ is regular at $x$.
\end{proof}

\begin{theorem} \label{thm:MoritaToric1}
There  is a Morita equivalence submersion 
$\phi: (M_1^{2n+k_1},\omega_1, \rho_1) \to (M_2^{2n+k_2},\omega_2, 
\rho_2)$ between two presymplectic toric manifolds if and only if 
there is a Morita equivalence embedding $\eta$ from the corresponding 
second framed momentum polytope $(P_2, Q_2)$ to the first framed 
momentum polytope $(P_1, Q_1)$
\end{theorem}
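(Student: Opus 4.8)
The plan is to prove the equivalence in two directions, translating the geometric data of a Morita equivalence submersion into the combinatorial data of a Morita equivalence embedding of framed polytopes, and vice versa.

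\medskip

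\noindent\textbf{From submersions to embeddings.} First I would start with a Morita equivalence submersion $\phi:(M_1^{2n+k_1},\omega_1,\rho_1)\to(M_2^{2n+k_2},\omega_2,\rho_2)$ and construct the embedding $\eta$. The key observation is that the surjective homomorphism $\theta:\mathbb{T}^{n+k_1}\to\mathbb{T}^{n+k_2}$ with connected kernel dualizes, at the Lie-algebra level, to an \emph{injective} linear map $\theta^*:\mathfrak{t}_2^*\cong\mathbb{R}^{n+k_2}\to\mathfrak{t}_1^*\cong\mathbb{R}^{n+k_1}$, and the condition that $\ker\theta$ is connected is exactly what guarantees that $\theta^*$ is an integral affine embedding in the sense defined in the excerpt (no torsion in the quotient of lattices). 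I would then check the intertwining relation between the momentum maps: because $\phi^*\omega_2=\omega_1$ and $\phi\circ\rho_1=\rho_2(\theta\times\phi)$, the momentum maps satisfy $F_1=\theta^*\circ F_2\circ\phi$ up to an additive constant, so that $\eta:=\theta^*$ carries $P_2=F_2(M_2)$ onto $P_1=F_1(M_1)$. The matching of the framings, $\eta(U(P_2))=U(P_1)$, follows from the uniqueness of symplectization (Theorem \ref{thm:LocalSymplectization}): the symplectizations $\hat M_i$ have momentum images $Q_i$, and $\phi$ lifts to a compatible map of symplectizations, so the local polyhedral geometry near $P_2$ maps to that near $P_1$.

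\medskip

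\noindent\textbf{From embeddings to submersions.} Conversely, given a Morita equivalence embedding $\eta:\mathbb{R}^{N_2}\to\mathbb{R}^{N_1}$ with $\eta(P_2)=P_1$, $\eta(U(P_2))=U(P_1)$ and $N_1\geq N_2$, I would produce $\phi$ by reversing the above correspondence. The integral affine embedding $\eta$ is the dual of a surjective homomorphism $\theta:\mathbb{T}^{N_1}\to\mathbb{T}^{N_2}$ whose kernel is connected (again the no-torsion condition is precisely connectedness of $\ker\theta$). The existence of the submersion $\phi$ then comes from the classification half of the previous theorem (parts (ii) and (iii) of the classification theorem for presymplectic toric manifolds): since $\eta$ matches $(P_2,Q_2)$ with a subframing of $(P_1,Q_1)$, the fibers of $\phi$ should be the orbits of the connected subtorus $\ker\theta$, and one checks that quotienting $M_1$ by this subtorus action produces a presymplectic toric manifold with framed momentum polytope isomorphic to $(P_2,Q_2)$, which by the classification must be $M_2$. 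Setting $\phi$ to be the resulting quotient map gives a submersion with connected fibers, and $\phi^*\omega_2=\omega_1$ holds because the kernel directions lie in $\ker\omega_1$.

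\medskip

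\noindent\textbf{Main obstacle.} I expect the hardest step to be verifying that quotienting $M_1^{2n+k_1}$ by the connected subtorus $\ker\theta$ yields a \emph{smooth manifold} which is again a presymplectic toric manifold, and that its framed momentum polytope is exactly $(P_2,Q_2)$ rather than merely isomorphic to it in some weaker sense. This requires that $\ker\theta$ act with constant orbit dimension and that its orbits lie inside $\ker\omega_1$ so the quotient presymplectic form is well defined; controlling this is where the regularity and constant-corank hypotheses, together with the framing-matching condition $\eta(U(P_2))=U(P_1)$, must be used carefully. The cleanest route is probably to do everything upstairs on the symplectizations $\hat M_i$ using Theorem \ref{thm:LocalSymplectization}, where the torus actions are genuinely Hamiltonian on honest symplectic manifolds, and then restrict back down to the presymplectic slices $M_i=\hat F_i^{-1}(P_i)$, so that the combinatorial matching of the polyhedra $Q_i$ directly dictates the geometric matching of the manifolds.
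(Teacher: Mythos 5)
Your overall strategy is the same as the paper's: identify the Morita data on the manifold side with the dual homomorphism $\theta^*:\mathfrak{t}_2^*\to\mathfrak{t}_1^*$, use the intertwining $F_1=\theta^*\circ F_2\circ\phi$ (up to a constant) to get $\eta(P_2)=P_1$, observe that the orbits of the connected subtorus $\ker\theta\cong\mathbb{T}^s$ ($s=k_1-k_2$) lie in $\ker\omega_1$ and coincide with the fibers of $\phi$, and pass to the symplectizations to compare framings. The one step where your write-up would fail as literally stated is the framing-matching in the forward direction: you say that ``$\phi$ lifts to a compatible map of symplectizations,'' but no submersion $\hat M_1\to\hat M_2$ with $\hat\phi^*\hat\omega_2=\hat\omega_1$ can exist, since $\hat\omega_1$ is nondegenerate and the dimensions differ, so its fibers would have to lie in $\ker\hat\omega_1=0$. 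The paper's mechanism is instead Marsden--Weinstein reduction: the sub-momentum map $(H_1,\dots,H_s)$ of the $\mathbb{T}^s$-action on $\hat M_1$ vanishes on $M_1$ (after normalization), the action is free there, and reducing $\hat M_1$ at the zero level yields a symplectization of $M_2$ whose momentum image is the transversal slice $Q'=Q_1\cap\{H_1=0\}\cap\cdots\cap\{H_s=0\}$; uniqueness of symplectization (Theorem \ref{thm:LocalSymplectization}) then identifies $(P_2,Q_2)$ with $(P_1,Q')$, which is exactly the Morita equivalence embedding. Your ``main obstacle'' paragraph essentially points at this fix (work upstairs on $\hat M_1$ and restrict), so the repair is local; your treatment of the converse via the classification theorem matches the paper, which only says the converse is ``similar,'' and your identification of freeness of the $\ker\theta$-action as the delicate point there is exactly where the paper's earlier propositions on the independence of the kernel vector fields are needed.
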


\begin{proof} Let $\phi: (M_1^{2n+k_1},\omega_1, \rho_1) \to 
(M_2^{2n+k_2},\omega_2, \rho_2)$ be a Morita equivalence submersion 
between two presymplectic toric manifolds. By definition,
we have that $\omega_1 = \phi^* \omega_2$, hence the tangent spaces 
to the fibers of the projection map $\phi: M_1 \to M_2$ lie in the 
kernel of $\omega_1$. These tangent spaces have dimension 
$s = k_1 - k_2$. 

Denote by $\mathbb{T}^s = \theta^{-1}(0)$ the kernel of $\theta$ 
(it is a torus of dimension $s$), where $\theta: \mathbb{T}^{n+k_1} 
\to \mathbb{T}^{n+k_2}$ is the surjective homomorphism given in 
Definition \ref{defn:MoritaPresymplectic} of Morita equivalence 
submersion. For any $x \in M_1$ and $t \in \mathbb{T}^s$ we have 
$\phi (\rho_1(t,x)) = \rho_2(\theta(t), \phi(x)) =
\rho_2(0, \phi(x)) = \phi(x)$, which means that the orbit of the 
action of $\mathbb{T}^s$ through $x$ in $M_1$ lies in the fiber 
of $\phi$ which contains $x$. 

Consider a symplectization $(\hat{M}_1^{2n+2k_1},\omega_1, \rho_1)$ of 
$(M_1^{2n+k_1},\omega_1, \rho_1)$ and the sub-Hamiltonian 
$\mathbb{T}^s$-action 
of the Hamiltonian $\mathbb{T}^{n+k_1}$-action on it. Denote by
$(H_1,\hdots,H_s): \hat{M}_1^{2n+2k_1} \to \mathbb{R}^s$ the 
momentum map of this $\mathbb{T}^s$-action, and by $X_1,\hdots,X_s$
the corresponding infinitesimal generators,
with $X_i$ being the Hamiltonian vector field of $H_i$. 
Since $X_i$ lies in the kernel
of $\omega_1$ on $M_1$, it follows that $H_i$ is constant on $M_1$ for 
every $i=1,\hdots, s$, and without losing generality we can assume 
that $H_i = 0$ on $M_1$.

Since $X_i$ are in the kernel of $\omega_1$ on $M_1$, the results 
of the previous section say that the vector fields $X_1,\hdots,X_s$ 
are independent everywhere on $M_1$, i.e., the action of 
$\mathbb{T}^s$ on $M_1$ is locally free everywhere. It follows that
the orbits of $\mathbb{T}^s$ on $M_1$ have dimension $s$ everywhere, 
and hence  coincide with the fibers of the submersion $\phi$. The 
action of $\mathbb{T}^s$ is free almost everywhere (being a
sub-action of a $\mathbb{T}^{n+k_1}$-action which is free
almost everywhere), and its orbits form a locally trivial 
fibration, so the action must actually be free everywhere (without 
any discrete isotropy at any point).

Consider the Marsden-Weinstein reduction of 
$(\hat{M}_1^{2n+2k_1},\omega_1)$ with respect
to the $\mathbb{T}^s$-action at the zero level 
$H_1=\hdots = H_s =0$. One verifies easily that
the reduced symplectic manifold is a symplectization of 
$(M_2^{2n+k_2},\omega_2, \rho_2)$
and the image of the momentum map of the Hamiltonian 
$\mathbb{T}^{n+k_2}$-action on it coincides with 
$Q' = Q_1 \cap \{H_1 = 0\} \cap \hdots \cap \{H_s = 0\}$, 
which is a regular framing of $P$ in an $(n+k_2)$-dimensional 
space. Due to the uniqueness of symplectization
of $(M_2^{2n+k_2},\omega_2, \rho_2)$ up to isomorphisms, we have 
that the framed polytope $(P_2,Q_2)$ is isomorphic to $(P,Q')$, 
which means that there is a Morita equivalence
embedding from $(P_2,Q_2)$ to $(P_1,Q_1)$. 

The converse statement can be proved in a similar way. 
\end{proof}

\begin{theorem}\label{thm:MoritaToric2}
The Morita equivalence of compact presymplectic toric manifolds 
is a true equivalence relation, and two compact presymplectic 
toric manifolds are Morita-equivalent if
and only if their corresponding framed momentum polytopes are
Morita-equivalent.
\end{theorem}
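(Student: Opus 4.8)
The plan is to reduce both assertions to their polytope counterparts, which have already been established upstream. The key is the dictionary furnished by Theorem \ref{thm:MoritaToric1}, together with the classification of compact presymplectic toric manifolds by their framed momentum polytopes and the existence-uniqueness of symplectization (Theorem \ref{thm:LocalSymplectization}). Recall that each compact presymplectic toric manifold $(M^{2n+k},\omega,\rho)$ has a framed momentum polytope $(P,Q)$ that is well defined up to integral affine isomorphism, and conversely every regular rational-faced framed simple convex polytope is realized as the framed momentum polytope of a compact presymplectic toric manifold. Thus the assignment $M \mapsto (P,Q)$ descends to a bijection between isomorphism classes of compact presymplectic toric manifolds and isomorphism classes of such framed polytopes, and I would use this bijection to transport statements back and forth.

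First I would establish the biconditional. If $(M_1^{2n+k_1},\omega_1,\rho_1)$ and $(M_2^{2n+k_2},\omega_2,\rho_2)$ are Morita equivalent, then by Definition \ref{defn:MoritaPresymplectic} there is a third compact presymplectic toric manifold $M_3$ together with Morita equivalence submersions $\phi_i:M_3 \to M_i$, $i=1,2$. Writing $(P_i,Q_i)$ for the framed momentum polytopes, Theorem \ref{thm:MoritaToric1} converts each $\phi_i$ into a Morita equivalence embedding from $(P_i,Q_i)$ into $(P_3,Q_3)$. Hence both $(P_1,Q_1)$ and $(P_2,Q_2)$ embed into $(P_3,Q_3)$, which is precisely Morita equivalence of the framed polytopes in the sense of Definition \ref{defn:MoritaPolytopes}(ii). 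For the converse, if $(P_1,Q_1)$ and $(P_2,Q_2)$ are Morita equivalent, there is a regular rational-faced framed simple convex polytope $(P_3,Q_3)$ into which both embed; by the realization part of the classification theorem $(P_3,Q_3)$ is the framed momentum polytope of some compact presymplectic toric manifold $M_3$, and Theorem \ref{thm:MoritaToric1} turns the two embeddings back into Morita equivalence submersions $M_3 \to M_1$ and $M_3 \to M_2$, so $M_1$ and $M_2$ are Morita equivalent.

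It then remains to deduce that Morita equivalence of compact presymplectic toric manifolds is a true equivalence relation. Reflexivity and symmetry I would read off directly from Definition \ref{defn:MoritaPresymplectic}: for reflexivity take $M_3=M_1$ with identity submersions, and symmetry is built into the definition, which is symmetric in the two manifolds. For transitivity I would transport the problem across the bijection $M \mapsto (P,Q)$: by the biconditional just proved, a chain $M_1 \sim M_2 \sim M_3$ corresponds to a chain $(P_1,Q_1) \sim (P_2,Q_2) \sim (P_3,Q_3)$ of Morita equivalences of framed polytopes, which by Theorem \ref{thm:MoritaPolytopes} yields $(P_1,Q_1) \sim (P_3,Q_3)$; translating back through the biconditional gives $M_1 \sim M_3$.

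The substantive work has already been carried out: the crossed-product construction establishing transitivity for framed polytopes (Theorem \ref{thm:MoritaPolytopes}) and the Marsden-Weinstein reduction correspondence between submersions and embeddings (Theorem \ref{thm:MoritaToric1}). Consequently the only point requiring care here is the faithfulness of the dictionary, namely that every framed polytope appearing in the polytope argument is realized by a genuine compact presymplectic toric manifold, and that the framed polytope attached to a manifold is independent of the chosen symplectization. Both are guaranteed by the classification theorem and by the uniqueness clause in Theorem \ref{thm:LocalSymplectization}, so I expect no remaining geometric obstacle; the argument is essentially a translation, and its only delicate aspect is bookkeeping the directions of the arrows, since a submersion $M_i \to M_j$ corresponds to an embedding of framed polytopes in the \emph{reverse} direction.
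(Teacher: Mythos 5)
Your proposal is correct and follows exactly the paper's route: the paper proves this theorem in one line as "a direct consequence of Theorem \ref{thm:MoritaPolytopes} and Theorem \ref{thm:MoritaToric1}," and your argument is precisely the fleshed-out version of that deduction, with the arrow directions handled correctly. No issues.
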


\begin{proof} It is a direct consequence of Theorem 
\ref{thm:MoritaPolytopes}
and Theorem \ref{thm:MoritaToric1}.
\end{proof}

\begin{example} {\rm 
The two intervals $P_1 =[A,B]$ and $P_2 = [G,H]$ in Figure 
\ref{fig:Framing1} are isomorphic. Their framings, also shown in 
Figure \ref{fig:Framing1} are not isomorphic, but are Morita-
equivalent. The framed $P_2$ corresponds to the presymplectic
manifold $S^3$ with the Hopf circle fibration as the  kernel 
isotropic foliation, while the framed $P_1$ corresponds to the 
presymplectic manifold $S^2 \times S^1$ whose kernel isotropic 
foliation is the projection to $S^2$. Both have $S^2$ as the
quotient space. By taking a direct product of the framing of $P_2$ 
with an interval, one can easily realize a 3-dimensional framing of 
$P_2$ which is Morita-equivalent to both the framed $P_1$ and
the framed $P_2$ via integral affine embeddings. 
}
\end{example}

\begin{figure}[!ht]
\centering
\includegraphics[width=\textwidth]{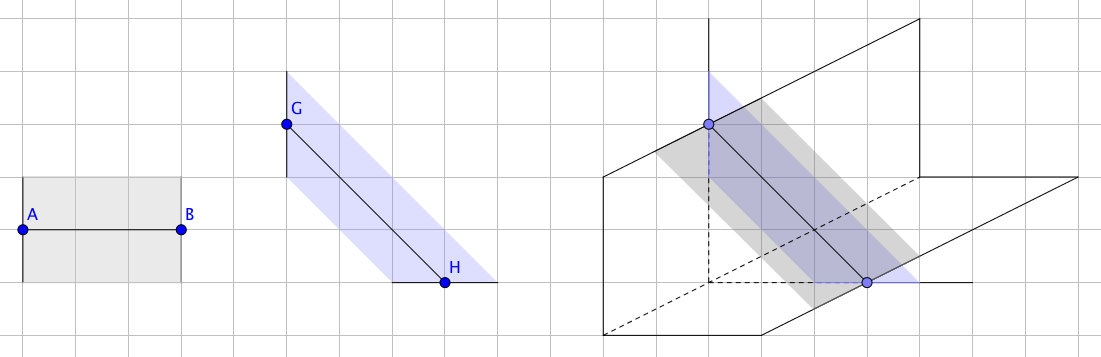}
\caption{Non-isomorphic but Morita-equivalent framed intervals}
\label{fig:Framing1}
\end{figure}

\subsection{Toric orbifolds and quasifolds} \hfill 

Theorem \ref{thm:MoritaToric2} reduces the problem of classification
of presymplectic toric manifolds up to Morita equivalence to the 
combinatorial problem of classification of framed polytopes up to
Morita equivalence. In particular, if two presymplectic toric 
manifolds are Morita equivalent then their momentum polytopes must
be integral-affinely isomorphic, though the same polytope (without
framing) can correspond to infinitely many non-Morita-equivalent 
presymplectic toric manifolds.

It is clear that the following three conditions are equivalent:

(i) the quotient of a presymplectic toric manifold
$(M^{2n+k},\omega, \rho)$ by the kernel isotropic foliation is 
Hausdorff;

(ii) all the leaves of the foliation are closed; 

(iii) the momentum polytope is rational. 

In the rational case, the leaves of the kernel isotropic foliation 
are $k$-dimensional tori (so we get a higher-dimensional
analogue of Seifert fibrations), and the quotient space 
$(M^{2n+k}/\text{kernel},\omega/\text{kernel}, \rho/\text{kernel})$ 
is a $2n$-dimensional symplectic toric orbifold. 

Compact symplectic toric orbifolds have been classified (up to 
equivariant symplectomorphisms) by Lerman and Tolman 
\cite{LeTo-Orbifold1997}: they put on each facet of
the momentum polytope a positive interger $m$, which corresponds to 
the orbifold type $D^{2(n-1)} \times D^2/\mathbb{Z}_m$ of points 
whose image under the momentum map lies in the facet. A rational 
convex polytope together with one positive interger for each facet 
is called a \textbf{\textit{weighted rational convex polytope}}. 
Lerman and Tolman \cite{LeTo-Orbifold1997} proved that
connected compact symplectic toric orbifolds are classified by their 
weighted rational convex polytopes (up to natural isomorphisms), 
and any weighted rational convex polytope can be realized 
by a compact symplectic toric orbifold. 

We can recover the above-mentioned result of Lerman and Tolman 
from our language of Morita-equivalent framed momentum polytopes 
as follows. 

Let $(P,Q)$ be a rational simple polytope with a regular
framing, $Q$ is of dimension $n+k$ and sits in $\mathbb{R}^{n+k}$,
$P = L \cap Q$ where $L$ is a rational $n$-dimensional affine subspace of
$\mathbb{R}^{n+k}$ which intersects $Q$ transversally. Let $\zeta_P$
be a facet of $P$ and $\zeta_Q$ be the corresponding facet of $Q$, 
$\zeta_P \subset \zeta_Q$. Fix a point $x \in \zeta_P$ and a basis
$\alpha_1,\hdots, \alpha_{n+k}$ of the integral lattice of $T_xQ$.
This basis can be chosen so that 
$\alpha_1,\hdots,\alpha_{n-1} \in T_x\zeta_P$ and
$\alpha_1,\hdots,\alpha_{n+k-1} \in T_x\zeta_Q$. The vector $\alpha_n$
does not belong to $T_xP$ in general, but there exists a linear 
combination $\beta = \sum_{i=1}^{n+k}c_i\alpha_i$ with integer 
coefficients $c_i$ such that $\beta \in T_x\zeta_P$ and $c_{n+k} > 0$. 
The minimal positive number $c_{n+k} > 0$ for which such an integral 
linear combination $\beta = \sum_{i=1}^{n+k}c_i\alpha_i \in T_x\zeta_P$ 
exists will be called the \textbf{\textit{weight}} of the facet 
$\zeta_P$ in the framing $(P,Q)$. It is easy to see that this number 
does not depend on the choice of the basis $(\alpha_1,\hdots, 
\alpha_{n+k})$. So each facet has a weight which is a positive 
integer, which depends only on the framed polytope.  

For any choice of weights for the facets of a given rational simple 
polytope, there always exists a regular framing with those weights. 
Indeed, in the construction of the cubic framing $Q$
given in Subsection \ref{subsection:LiftingFraming}, each facet of 
$Q$, which corresponds to a facet $\zeta_i$ of $P$, is given 
by an equation of the type
$$ y_i = \sum_{j=1}^n a_{ij} x_j + b_i,$$
(where $a_{ij}$ are integers because $P$ is rational),
and it is easy to check that the weight of this facet is nothing else 
but the greatest common divisor of the numbers $a_{i1}, \hdots, a_{in}$. 
By multiplying all the coefficients $a_{i1}, \hdots, a_{in}$ and 
$b_i$ by $p/q$, where $q$ is the greatest common divisor of 
$a_{i1}, \hdots, a_{in}$, and $p$ is any new weight that we want 
to have, we can change $Q$ to a new regular frame (while leaving 
$P$ unchanged), such that the weight of the facet $\zeta_i$ is 
changed from $q$ to $p$. 

If a regular framed polytope $(P,Q_1)$ admits a Morita equivalence 
embedding into a regular framed polytope $(P,Q_2)$, and $x$ is 
a point in a facet $\zeta_P$ of $P$, then a basis 
$(\alpha_1,\hdots, \alpha_{n+k})$ of the integral lattice of 
$T_xQ_1$ with the above properties can be completed to a basis 
of the integral lattice of $T_xQ_2$ with similar properties. This 
implies that the weight of $\zeta_P$ given by the framing $(P,Q_1)$ 
is the same as its weight given by the framing $(P,Q_2)$.
Hence facet weights are invariants with respect to Morita 
equivalence transformations of regular framed polytopes. 

Let us now show the converse: if two regular framings  
$Q_1$ and $Q_2$ of a rational simple convex polytope $P$ give 
rise to the same weight for each facet of $P$, then  $(P,Q_1)$ and 
$(P,Q_2)$ are Morita equivalent. In order to show it, we need
to construct a third regular framing $(P,Q_3)$ with Morita 
equivalence embeddings from $(P,Q_1)$ and $(P,Q_2)$ to $(P,Q_3)$. 
$Q_3$ can be constructed as the crossed
product of $Q_1$ and $Q_2$ relative to $P_3$ in a way which is 
absolutely similar to the construction in the proof of Theorem 
\ref{thm:MoritaPolytopes}. One then verifies directly that
$Q_3$ is regular, also in a similar way to the proof of Theorem 
\ref{thm:MoritaPolytopes}. So we obtain the following result, 
which incorporates the classification theorem of Lerman and Tolman 
\cite{LeTo-Orbifold1997}.

\begin{theorem} Consider two connected compact presympletic toric 
manifolds. The following conditions are equivalent:
\begin{itemize}
\item[{\rm (i)}] they are Morita equivalent;
\item[{\rm (ii)}] their quotients by the kernel isotropic foliations 
are isomorphic as symplectic toric orbifolds;
\item[{\rm (iii)}] their momentum polytopes are isomorphic and, 
moreover, have the same facet weights
given by the respective regular framings.
\end{itemize}
\end{theorem}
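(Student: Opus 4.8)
The plan is to prove the two equivalences (i)$\Leftrightarrow$(iii) and (ii)$\Leftrightarrow$(iii) separately, since most of the required machinery has already been assembled. Throughout I note that condition (ii) is only meaningful when the quotients are genuine orbifolds; by the trichotomy recorded just before the statement this happens exactly when the momentum polytopes are rational, and since rationality is an integral-affine isomorphism invariant, the polytope isomorphism in (iii) keeps both sides simultaneously in or out of the rational case. Hence (i)$\Leftrightarrow$(iii) will be established unconditionally, while (ii) enters the picture precisely in the rational regime.

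For (i)$\Leftrightarrow$(iii) I would first invoke Theorem~\ref{thm:MoritaToric2}, which reduces the Morita equivalence of the two presymplectic toric manifolds to the Morita equivalence of their framed momentum polytopes $(P_1,Q_1)$ and $(P_2,Q_2)$. It then remains to show that two regular framed polytopes are Morita equivalent if and only if their underlying polytopes are integral-affinely isomorphic and corresponding facets carry equal weights. One direction is exactly the weight-invariance computation carried out before the statement: a Morita equivalence embedding restricts to an integral affine isomorphism of the cores $P_1\cong P_2$, and a compatible basis of the smaller lattice $T_xQ_1$ extends to one of $T_xQ_2$, so the minimal-coefficient definition of the facet weight is unchanged. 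For the converse I would use the crossed-product construction of Theorem~\ref{thm:MoritaPolytopes}: identifying $P_1=P_2=P$ and given equal facet weights, form the common refinement $Q_3$ as the crossed product of $Q_1$ and $Q_2$ over $P$ and verify, exactly as in that proof, that $Q_3$ is again a regular framing admitting Morita equivalence embeddings from both sides. Here the matching of weights is precisely what guarantees that the two framing lattices can be amalgamated into a single regular lattice along each facet.

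For (ii)$\Leftrightarrow$(iii) I would work in the rational case. The central step is to identify the framing weight of a facet $\zeta_P$ with the Lerman--Tolman orbifold weight $m$ attached to that facet. To see this I would pass to the normal form near a facet point: upstairs in the symplectization the corner is the Delzant model $D^{2(n-1)}\times D^2$, the kernel isotropic foliation is the orbit foliation of the subtorus $\mathbb{T}^k$ whose Lie algebra annihilates the affine subspace $L$, and the cyclic isotropy $\mathbb{Z}_m$ along the facet is the finite stabilizer produced when one restricts to $L$ and quotients by $\mathbb{T}^k$. A direct lattice computation then shows that this $m$ equals the minimal positive coefficient $c_{n+k}$ defining the framing weight. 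Granting the identification, the weighted polytope of the quotient orbifold is exactly the momentum polytope of (iii) decorated by its framing weights, and the Lerman--Tolman theorem states that two compact symplectic toric orbifolds are equivariantly symplectomorphic if and only if their weighted polytopes agree; this yields (ii)$\Leftrightarrow$(iii).

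The main obstacle is the weight identification in the previous paragraph: one must check that the purely combinatorial weight read off from the framing lattice coincides with the order of the cyclic isotropy group appearing in the orbifold chart $D^{2(n-1)}\times D^2/\mathbb{Z}_m$. Everything else is either a citation (Theorems~\ref{thm:MoritaToric2} and~\ref{thm:MoritaPolytopes}, and the classification of Lerman and Tolman) or a repetition of the crossed-product argument. I expect the identification to follow from tracking the integral lattice through the slice-theorem normal form, but it is the one place where the presymplectic framing picture and the orbifold isotropy picture must be reconciled explicitly.
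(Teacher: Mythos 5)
Your proposal is correct and follows essentially the same route as the paper: (i)$\Leftrightarrow$(iii) via Theorem~\ref{thm:MoritaToric2} together with the Morita-invariance of facet weights and the crossed-product construction of a common refinement when weights agree, and (ii)$\Leftrightarrow$(iii) by matching the framing weight with the Lerman--Tolman orbifold weight and citing their classification. The one step you flag as the "main obstacle" -- identifying the lattice-theoretic framing weight with the order of the cyclic isotropy group in the orbifold chart -- is exactly the point the paper also leaves implicit, so your articulation of it is consistent with (and slightly more explicit than) the paper's own argument.
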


For irrational polytopes, we do not have orbifolds but quasifolds 
in the sense of  Battaglia--Prato \cite{BaPr_SimpleNonrational2001} 
(after a lifting and framing). In this case, our Morita equivalence 
for framed polytopes and presymplectic toric manifolds can be 
understood as a natural isomorphism relation among symplectic toric 
quasifolds. 

\section{Some final remarks} 
\label{section:Remarks}

\begin{remark}
{\rm
In this paper we considered only 
simple polytopes, but in fact any non-simple
convex polytope $P$ also admits a lifting and rational framing $(P',Q)$ 
by the same constructions. $Q$ still satisfies the rationality, 
simplicity and regularity conditions at its faces, $P'$ is still 
a slice of $Q$ by an affine subspace $L$. The only difference is 
that if $P'$ is not simple then $L$ intersects $Q$ 
\textit{non-transversally}. We still have a symplectic 
$2(n+k)$-dimensional manifold $(M^{2n+2k},\omega)$ with a 
Hamiltonian $\mathbb{T}^{n+k}$-action $\rho$ on it with a momnetum 
map $F$ such that $F(M) = Q$, and can still take $M_P = F^{-1}(P')$ 
to be the $(2n+k)$-dimensional presymplectic toric variety 
corresponding to the framed polytope $(P',Q)$. When $P$ is not 
simple then this presymplectic toric variety is singular (not a 
manifold) but still has very reasonable topology and geometry. 
When $P$ is rational non-simple then $P'$ is isomorphic to $P$, 
we can talk about a framing $(P,Q)$ of $P$, take the quotient of 
the singular presymplectic toric variety $M_P = F^{-1}(P')$ by 
the kernel isotropy foliation to get a \textit{singular symplectic 
toric variety} corresponding to $(P,Q)$ (with algebraic singularities). 
Some results about such singular symplectic toric varieties can be 
found, for example, in \cite{BuGuLe_ToricSingular2005}. 
}
\end{remark}

\begin{remark}
{\rm
In \cite{LeTo-Orbifold1997}, Lerman and Tolman extended the convexity 
theorem of Atiyah--Guillemin--Sternberg to the case of Hamiltonian 
torus actions on symplectic orbifolds. In this paper, we didn't study 
presymplectic orbifolds, but we are pretty sure that the presymplectic 
convexity theorem (Theorem \ref{thm:convexity}) can be naturally 
extended to the case of presymplectic orbifolds, with essentially the 
same arguments for the proof.
}
\end{remark}

\begin{remark}
{\rm
One can extend in a natural way the theory of \textit{symplectic cuts}
\cite{Lerman-Cut1995} to the presymplecyic setting, and to
presymplectic toric manifolds in particular. The corresponding 
operations on the level of framed momentum polytopes will also be 
cuts by rational hyperplanes. Some results concerning cuts for 
irrational polytopes and associated quasifolds were obtained recently 
by Battaglia and Prato \cite{BaPr_NonrationalCuts2016}. We recall 
again that their non-rational polytopes need to be lifted 
(non-isomorphically) before they can be framed and then cut.
}
\end{remark}

\begin{remark}
{\rm
In \cite{Battaglia_ConvexPolytope2012,BaZa_Irrational2015}, 
Battaglia and Zaffran also worked on
foliation and quotient modelings of irrational analogs 
of toric varieties. Their approach is complex-analytic, based 
on ideas from complex geometric invariant theory and earlier results 
of Meersseman and Verjovsky \cite{MeVe-Toric2004} and others; it is 
very different from our real presymplectic approach. 
In the case of rational polytopes, different approaches should
give basically the same results. 
}
\end{remark}

\begin{remark}
{\rm In this paper we didn't talk about Kähler structures at all, but
one can put compatible Kähler structures on symplectizations of 
presymplectic toric manifolds (these symplectizations are ``semi-local'' 
versions of symplectic toric manifolds), and use reduction 
(with respect to kernel torus actions) to get Kähler structures on 
quotient spaces, which are toric orbifolds or quasifolds or 
``non-commutative toric varieties'' in the language of 
\cite{KLMV-NoncommutativeToric2014}. 
}
\end{remark}

\begin{remark}
{\rm
We have the following \textit{global symplectization conjecture}, 
which is the global version of Theorem \ref{thm:LocalSymplectization},
and which looks very reasonable to us: \textit{With the assumptions of 
Theorem \ref{thm:LocalSymplectization}
there exists a connected compact symplectic manifold $\hat{M}$ with an
effective Hamiltonian $\mathbb{T}^{q+d}$-action such that $M$ is
a flat presymplectic cut of $\hat{M}$ with this action.} If one weakens
this conjecture and requires $\hat{M}$ to be an orbifold instead of a 
manifold, then it becomes rather easy.
}
\end{remark}

\section*{Acknowledgement}

This paper was written during  Nguyen Tien Zung's stay at the School of 
Mathematical Sciences, Shanghai Jiao Tong University, as a visiting 
professor. He would like to thank Shanghai Jiao Tong University, the 
colleagues at the School of Mathematics of this university, and 
especially Tudor Ratiu, Jianshu Li, and Jie Hu for the invitation, 
hospitality and excellent working conditions.

\end{document}